\newtheorem{theorem}{Theorem}[section]
\newtheorem{lemma}[theorem]{Lemma}
\newtheorem{proposition}[theorem]{Proposition}
\newtheorem{corollary}[theorem]{Corollary}
\newtheorem{remark}[theorem]{Remark}
\newtheorem{definition}[theorem]{Definition}
\renewcommand{\geq}{\geqslant}
\renewcommand{\leq}{\leqslant}
\renewcommand{\tilde}{\widetilde}
\def\V{\mathbf{V}}
\def\ind{\mathrm{ind}_{LS}}
\def\A{\mathbf{A}}
\def\I{\mathbf{I}}
\def\G{\mathbf{G}}
\def\P{\mathbf{P}}
\def\Q{\mathbf{Q}}
\def\R{\mathbb{R}}
\def\C{\mathbb{C}}
\def\N{\mathbb{N}}
\def\lma{\lambda}
\def\part{\partial}
\def\Ker{\mathrm{Ker}\,}
\def\essinf{\mathrm{essinf}\,}
\def\esssup{\mathrm{esssup}\,}
\def\dist{\mathrm{dist}}
\def\la{\langle}
\def\ra{\rangle}
\def\vp{\varphi}
\def\F{\mathbf{F}}
\def\B{\mathbf{B}}
\def\c{\mathbb{C}}
\def\eps{\varepsilon}
\def\c{\mathrm{const.}}
\def\pr{\right )}
\def\le{\left (}
\def\part{\partial}
\def\K{\mathcal{K}}
\def\R{\mathbb{R}}
\def\N{\mathbb{N}}
\def\K{\mathbf{K}}
\def\Re{\mathrm{Re}\,}
\def\mm{\kern +2pt \raisebox{+0.5 pt}{$\shortmid$}\kern -2pt\hbox{$\multimap$}\kern +2pt}
\def\H1{H^1\le\Omega,\mathbb{R}^n\pr}
\def\L2{L^2\le\Omega,\mathbb{R}^n\pr}
\def\h10{H^1_0\le\Omega,\mathbb{R}^n\pr}
   \newcommand{\be}{\begin{equation}}
  \newcommand{\ee}{\end{equation}}
\numberwithin{equation}{section}
\author{Aleksander \'Cwiszewski, Wojciech Kryszewski}
\address{Faculty of Mathematics and Computer Sciences, Nicolaus Copernicus University,  Toru\'n, Poland}
\email{aleks@mat.umk.pl, wkrysz@mat.umk.pl}
\title[Bifurcation form infinity]{Bifurcation from infinity for elliptic problems on $\R^N$}
\date{\today}
\begin{document}
\pagestyle{myheadings}
\baselineskip15pt
\begin{abstract}
In the paper the asymptotic bifurcation of solutions to a parameterized stationary semilinear Schr\"odinger equation involving a potential of the Kato-Rellich type  is studied. It is shown that the bifurcation from infinity occurs if the parameter is an eigenvalue of the hamiltonian lying below  the asymptotic bottom of the bounded part of the potential. Thus the bifurcating solution are related to bound states of the corresponding Schr\"odinger equation. The argument relies on the use of the (generalized) Conley index due to Rybakowski and resonance assumptions of the Landesman-Lazer or sign-condition type.
\end{abstract}
\maketitle
\vspace{-7mm}

\section{Introduction}

We study a parameterized elliptic problem  \begin{equation}\label{24062015-1032}
\left\{ \begin{array}{l}
-\Delta u(x) + V(x)u(x)=\lma u(x)+f(x,u(x)),\, x\in \R^N, \lma\in\R,\\
u\in H^1(\R^N),
\end{array}\right.
\end{equation}
related to a nonlinear Schr\"odinger equation \eqref{nse} and its bound states of the form \eqref{standing}. Solutions to \eqref{24062015-1032} may also be interpreted as stationary states of the corresponding reaction-diffusion equation \eqref{15052017-2001}.\\
\indent We are interested in a characterization of asymptotic bifurcation for \eqref{24062015-1032}.
\begin{definition}
{\em A parameter $\lma_0\in \R$ is a point of {\em bifurcation from infinity} or {\em asymptotic bifurcation} of solutions to \eqref{24062015-1032} if there exists a sequence $(\lma_n, u_n)_{n=1}^\infty$  such that $\lma_n\to \lma_0$, $u_n\in H^1(\R^N)$ is a weak solution of (\ref{24062015-1032}) with $\lma=\lma_n$ for each $n\geq 1$, and $\|u_n\|_{H^1} \to +\infty$.}
\end{definition}
\indent The study of asymptotic bifurcation, apparently started by M. Krasnoselskii \cite{Kras},  who introduced the notion of an asymptotically linear operator, and P. Rabinowitz \cite{Rabinowitz}, as well as the study of bifurcation from zero (i.e. from the zero solution), have been conducted by numerous authors from both the abstract and application viewpoints (e.g. by Toland, Dancer, Mawhin,  Schmitt, Ward and many others; see e.g. \cite{Toland,Dancer,Ward1,Mawhin,Schmitt_Wang}). These problems are related since it is often possible to adapt ideas and techniques coming from the study of bifurcation from zero to asymptotic bifurcation; this was effectively employed by Toland in \cite{Toland} and in \cite{Rabinowitz,Stuart1} via the so-called {\em Toland inversion}.  Most of applications to PDEs were concerned with bifurcation and multiplicity of solutions to elliptic problems of the form $-\Delta u=\lambda u+f(x,u)$ on a bounded domain $\Omega\subset \R^N$ together with various boundary conditions (see e.g. \cite{Arrieta,Gamez,DLi}). A careful analysis of interactions (i.e. crossing) of $\lambda$ with the (purely discrete) spectrum of $-\Delta$ subject to the boundary condition along with appropriate behavior of $f$ such as, for instance, the so-called `sign condition', leads to the existence and multiplicity of solution. In \cite{Mawhin} (see also \cite{Chiap,Mawhin_Chiap,Schmitt_Wang}) it was pointed out that a {\em condition of the Landesman-Lazer type} could substitute the sign condition. The topological tools used depend on the parity of the crossed eigenvalue of $-\Delta$: roughly speaking topological degree techniques are exploited if $\lambda $ crosses an eigenvalue of odd multiplicity while variational methods are used in the case of even multiplicity.\\
\indent The problem of bifurcation of solutions to elliptic problems on $\R^N$ is not that well-recognized. A detailed study of bifurcation from zero is given e.g. in \cite{Evequoz,Stuart,Rabier}, while questions of asymptotic bifurcation were dealt with in \cite{Genoud}, \cite{Stuart1} (see also the references therein) and \cite{Kr-Sz}. An important issue of the spectral theory of elliptic equations on $\R^N$, as opposed to its counterpart on bounded domains, is that the spectrum of $-\Delta+V(x)$ is not discrete in general and, depending on the potential, may be quite complicated. Results from \cite{Genoud,Stuart1,Kr-Sz} show that the existence of asymptotic bifurcation at an eigenvalue $\lambda_0$ relies on the appropriate relationship between $\lambda_0$, $f$ and the essential spectrum of $-\Delta+V(x)$ inasmuch as bound states of the Schr\"odinger equation correspond to energies below the bottom of the essential spectrum.\\
\indent Let us now present the standing assumptions. As concerns the potential generating the hamiltonian $${\A}:=-\Delta+V(x)$$ we assume that
\begin{align}\label{postac poten}  &V\in L^\infty(\R^N)+L^p(\R^N),\; \text{i.e.}\;\; V=V_\infty+V_0,\; \text{where}\\
\label{24062015-1047} &V_\infty \in L^\infty(\R^N)\;\; \text{and}\;\; V_0\in L^p(\R^N),\; p\geq 2\;\; \text{if}\;\; N=1,\; p>2\;\; \text{if}\;\; N=2\;\; \text{and}\;\; p\geq N\;\; \text{for}\;\; N\geq 3,\end{align}
and, as concerns the nonlinear interaction term, we assume that $f:\R^N\times \R\to\R$ is a Carath\'eodory  function such that
\begin{align}\label{17022016-1449}
&|f(x ,u)|\leq m(x) \mbox{ for all } u\in\R \mbox{ and a.e. } x\in\R^N,\\
\label{17022016-1450}
&|f(x,u)-f(x,v)|\leq l(x)|u-v| \mbox{ for all }  u,v\in\R \mbox{ and for a.e. } x\in\R^N,
\end{align}
where  $m\in L^2(\R^N)$, $l=l_0+l_\infty$ with $l_0$ satisfying  (\ref{24062015-1047}) (with $l_0$ instead of $V_0$) and $l_\infty\in L^\infty(\R^N)$.
\begin{remark}\label{pocz} {\em Observe that $V$ belongs the the so-called {\em Kato class} of potentials $K_N$ considered by Aizenman and Simon (see \cite[A.2]{Simon}) since, $L^r(\R^N)\subset K_N$ whenever $r\geq 2$ with $r>N/2$,  $N\geq 2$, or a slightly more general class considered in \cite{Hempel}. If, for instance, $V$ is the {\em Coulomb type} potential, i.e. $V(x):=c/|x-x_0|^{\alpha}$ for $x\neq x_0$, where $x_0\in\R^N$, $c\in\R$ and $\alpha\in [0,\sfrac{1}{2})$ if $N=1$, $\alpha\in [0,1)$ for $N=2$ and $\alpha\in [0,1)$ for $N\geq 3$, then $V$ satisfies conditions \eqref{postac poten} and \eqref{24062015-1047} since one may take $V_0=\chi V$ and $V_\infty=(1-\chi)V$, here $\chi$ is the characteristic function of the unit ball in $\R^N$ around $x_0$.
\hfill $\square$}
\end{remark}
\indent Since  $\lim_{|s|\to +\infty}f(x,s)/s=0$ for $x\in\R^N$, one expects that, as in the classical situation (see e.g. \cite{Rabinowitz}), if $\lambda$ approaches an eigenvalue of ${\A}$, then solutions to \eqref{24062015-1032} bifurcate from infinity  as the result of a  produced resonance phenomenon. Indeed: as we shall see in Theorem \ref{s16022016-1655}, the necessary condition for $\lma_0$ lying beyond the essential spectrum of the hamiltonian for inducing asymptotic bifurcation is that $\lambda_0\in\sigma_p({\A})$ the point spectrum of the hamiltonian. Conversely, if $\lambda_0$ is an isolated eigenvalue of {\em odd} multiplicity, then the asymptotic bifurcation occurs. In order to provide sufficient conditions for asymptotic  bifurcation from an isolated eigenvalue of {\em even} multiplicity, one needs to impose additional assumptions concerning the behavior  of $f$ at infinity: the so-called {\em Landesman-Lazer type} or {\em strong resonance} conditions.\\
\indent The {\em Landesmann-Lazer type conditions} state that either
$$
\left\{
\begin{array}{c}
\check{f}_+(x) \geq 0 \ \mbox{ and } \ \hat{f}_-(x)\leq 0 \ \mbox{ for a.e. } \ x\in\R^N,\\
\mbox{there is a set of positive measure on which none of}\;\; \check{f}_+ \mbox{ and } \hat{f}_- \mbox{vanishes},
\end{array}
\right.
\leqno{(LL)_+}
$$
or
$$
\left\{
\begin{array}{c}
\hat{f}_+(x)\leq 0 \ \mbox{ and } \ \check{f}_- (x)\geq 0 \ \mbox{ for a.e. } \ x\in\R^N,\\
\mbox{there is a set of positive measure on which none of }\; \hat{f}_+ \mbox{ and }  \check{f}_- \mbox{ vanishes,}
\end{array} \right.
 \leqno{(LL)_-}
$$
where $\hat{f}_\pm(x):= \limsup_{s \to \pm\infty} f(x,s)$ and  $\check{f}_\pm (x):= \liminf_{s \to \pm\infty} f(x,s)$ for $x\in\R^N$.
\begin{remark}\label{LL} {\em Conditions of this type has been considered by many authors; see e.g.
\cite{Fonda} for a relatively up-to-date survey. Observe (see also the proof of Lemma \ref{25092017-1854}) that $(LL)_+$ (resp. $(LL)_-$), together with the so-called {\em unique continuation property}, imply that
\be\label{ll}\int_{\R^N}(\check{f}_+\varphi^+-\hat{f}_-\varphi^-)\,dx>0\;\;
\left(\text{resp.}\;\; \int_{\R^N}(\hat{f}_+\varphi^+-\check{f}^-\varphi^-)\,dx<0\right)\ee
for any eigenfunction $\varphi$ of the hamiltonian ${\A}$ and $\vp^\pm=\max\{0,\pm\vp\}$. Clearly \eqref{ll} is the classical Landesman-Lazer condition (see e.g. \cite[eq. $(LL)$]{Fonda}); one can easily check by proof-inspection that each of the conditions stated in \eqref{ll} is actually sufficient for our purposes.\hfill $\square$ }
\end{remark}
\indent The so-called {\em sign conditions} or {\em strong resonance conditions}
 are fulfilled if
 $k_\pm(x):=\lim_{s\to \pm\infty} sf(x,s)$ exists for a.a. $x\in\R^N$, $k_\pm\in L^\infty(\R^N)$ and either
$$\left\{
\begin{array}{c} sf(x,s)\geq 0\;\;\text{for a.a.}\; x\in\R^N\; \text{and all}\; s\in\R,\\
\mbox{and there is a set of positive measure on which } k_\pm\; \text{is positive,}\end{array}
\right.
\leqno{(SR)_+}
$$
or
$$\left\{
\begin{array}{c}
sf(x,s)\leq 0\;\;\text{for a.a.}\; x\in\R^N\; \text{and all}\; s\in\R,\\
\mbox{and there is a set of positive measure on which } k_\pm\;  \text{is negative}.\end{array}
\right.
\leqno{(SR)_-}
$$

As we shall see (comp. Lemma \ref{25092017-1854}) both assumption $(LL)_\pm$ and $(SR)_\pm$ lead to the geometric condition \eqref{25092017-2303} concerning inward (or outward) behavior of the nonlinearity with respect to eigenspaces of ${\A}$. Such conditions were already studied on an abstract level in \cite[Eq. (2.3) or (2.4)]{Mawhin}, \cite{Cesari} and \cite{Kokocki}. A discussion of some other resonance conditions and their role is provided in \cite{Bartolo}.

Our main result is as follows. Let
\be\label{def alfa}\alpha_\infty:=\lim_{R\to\infty}\essinf_{|x|\geq R}V_\infty(x),\ee
be the {\em asymptotic bottom of the potential $V_\infty$}.
\begin{theorem}\label{24062015-1056} Suppose that $\lambda_0\in\sigma(\A)$. If either\\
\indent {\em (i)} $\lambda_0$ is an isolated eigenvalue of odd multiplicity; or \\
\indent {\em (ii)} $\lma_0<\alpha_\infty$ {\em (\footnote{We shall see that this implies that $\lambda_0$ is an isolated eigenvalue of finite multiplicity.})} and  one of conditions $(LL)_\pm$  or $(SR)_\pm$ holds,\\
then $\lma_0$ is a point of bifurcation from infinity for {\em (\ref{24062015-1032})}.
\end{theorem}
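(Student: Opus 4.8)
The plan is to recast bifurcation from infinity as a change of the Conley index ``at infinity'' for the semiflow $\pi_\lambda$ generated by the parabolic equation $u_t=-\A u+\lambda u+f(\cdot,u)$, whose equilibria are exactly the weak solutions of (\ref{24062015-1032}). Using a Toland-type inversion $v:=u/\|u\|_{H^1}^2$ I would trade the large-norm regime for a neighbourhood of the equilibrium $v=0$ of a transformed flow; since $f$ is bounded and $f(x,s)/s\to 0$, near $v=0$ this flow is governed by the linear part $-\A+\lambda$, the genuinely nonlinear term being of lower order. Hence $\lambda_0$ is a point of bifurcation from infinity for (\ref{24062015-1032}) precisely when $v=0$ is a bifurcation point of the inverted problem, which is detected by a jump of the Conley index of $\{0\}$ as $\lambda$ crosses $\lambda_0$.

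The crux---and the main obstacle---is that $H^1(\R^N)\hookrightarrow L^2(\R^N)$ is not compact, so the usual compact-map degree theory fails and even the Conley index requires justification. I would therefore verify that $\{\pi_\lambda\}$ is \emph{admissible} in Rybakowski's sense, uniformly for $\lambda$ near $\lambda_0$. Here the hypothesis $\lambda_0<\alpha_\infty$ is decisive: it places $\lambda_0$ strictly below the bottom of the essential spectrum of $\A$, so that $\lambda_0$ is an isolated eigenvalue of finite multiplicity and there is a genuine spectral gap. Combining the far-field coercivity $\A-\lambda\ge(\alpha_\infty-\lambda)I$ on $\{|x|\ge R\}$ with the $L^2$-bound $|f(\cdot,u)|\le m$ and the Kato-class structure of $V$, and using Rellich compactness on balls for the near field, one prevents mass from escaping to infinity and obtains the asymptotic compactness of bounded trajectories required for admissibility.

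Given admissibility, the index at infinity is read off from the asymptotic linearization $-\A+\lambda$. Below $\alpha_\infty$ the spectrum of $\A$ is discrete, so the unstable subspace (spanned by eigenfunctions with eigenvalue $<\lambda$) is finite-dimensional, of some dimension $d(\lambda)$, and the index at infinity is the pointed sphere $S^{d(\lambda)}$; as $\lambda$ increases through $\lambda_0$ the number $d(\lambda)$ jumps by $m:=\dim\Ker(\A-\lambda_0)$. In case (i), $m$ odd, the sign $(-1)^{d(\lambda)}$---equivalently the reduced Euler characteristic of the index, a continuation invariant---differs on the two sides of $\lambda_0$; by the Conley continuation principle this is impossible unless large-norm solutions exist for $\lambda$ arbitrarily close to $\lambda_0$, which upon inverting back yields the required sequence $(\lambda_n,u_n)$. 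This is the analogue of Krasnoselskii's odd-multiplicity theorem and needs no information on the resonant eigenspace. In case (ii), $m$ even, the parity argument is vacuous and one must control the flow on $E_0:=\Ker(\A-\lambda_0)$ at $\lambda=\lambda_0$ itself: by Lemma \ref{25092017-1854} the conditions $(LL)_\pm$ or $(SR)_\pm$ furnish the geometric condition \eqref{25092017-2303}, i.e. for large $\|u\|$ the field $f(\cdot,u)$ points strictly inward (sign $+$) or outward (sign $-$) along $E_0$. This makes a large sphere in $E_0$ an isolating block with known exit set, so the index at infinity is well defined at $\lambda_0$ and equals a single sphere; comparing this value with those on the two sides of $\lambda_0$ forces a change of index, and the continuation argument again produces bifurcation.

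I expect essentially all of the topological bookkeeping---the sphere computation, the parity argument, and the reduction to the equilibrium $v=0$---to run parallel to the bounded-domain theory; the genuine difficulty is concentrated in the admissibility verification of the second step, without which the Conley index is not even defined on $\R^N$. Making the interplay of the spectral gap $\alpha_\infty-\lambda_0>0$ with the $L^2$-boundedness of $f$ quantitative, uniform in $\lambda$ near $\lambda_0$, and stable under the inversion is the heart of the proof. A secondary point, specific to case (ii), is to show that $(LL)_\pm$ and $(SR)_\pm$ really do produce an isolating block along the resonant directions; this relies on the unique continuation property invoked in Remark \ref{LL} to guarantee that eigenfunctions of $\A$ do not vanish on sets of positive measure.
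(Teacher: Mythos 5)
Your overall strategy for part (ii) --- admissibility of the semiflow from the spectral gap $\lambda_0<\alpha_\infty$ together with the $L^2$-bound on $f$, the linearization formula $\Sigma^{k(\lambda)}$ on either side of $\lambda_0$, and the resonance conditions supplying an inward/outward pointing condition on $\Ker(\A-\lambda_0\I)$ --- is indeed the strategy of the paper. But three of your steps have genuine problems. First, the Toland inversion $v=u/\|u\|_{H^1}^2$ is not only unnecessary but harmful here: the transformed vector field is no longer of the form ``sectorial operator plus globally Lipschitz perturbation'', so the generation of the semiflow, the admissibility estimates and Rybakowski's linearization theorem would all have to be re-established for the inverted flow, and you give no indication of how. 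The paper never inverts. It argues by contradiction: if no bifurcation occurs, every bounded full solution lies in a fixed large box $B=\{\|\P \bar u\|_{L^2}\le R_0,\ \|\Q \bar u\|_{H^1}\le R_\infty\}$ (Lemma \ref{18022016-0914} bounds the $\Q$-part via the Duhamel formula and the exponential dichotomy on $X_\pm$); $B$ is an isolating neighborhood for every $\lambda\in[\lambda_0-\delta,\lambda_0+\delta]$ because $\frac{d}{dt}\|\P u(t)\|_{L^2}^2$ has a fixed sign on $\{\|\P u\|_{L^2}=R_0\}$ by Lemma \ref{25092017-2310} and \eqref{25092017-2303}; then continuation (H4) forces $h(\Phi^{\lambda_0-\delta},K_{\lambda_0-\delta})=h(\Phi^{\lambda_0+\delta},K_{\lambda_0+\delta})$, contradicting $k(\lambda_0+\delta)\ne k(\lambda_0-\delta)$ from Proposition \ref{13122016-1923}. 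Note also that the logic is not ``the index changes, hence bifurcation''; it is ``continuation says the two end indices are equal, linearization says they are not''.

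Second, your treatment of case (i) does not prove the theorem as stated: you route (i) through the Conley index and a parity invariant, but by your own account admissibility needs $\lambda_0<\alpha_\infty$, which is not assumed in (i) (there $\lambda_0$ is merely an isolated eigenvalue of odd finite multiplicity, possibly lying above $\alpha_\infty$). The paper handles (i) by an entirely different, purely degree-theoretic argument: the substitution $u:=\P w+(\A-\lambda\I)\Q w$ turns the stationary equation into a fixed-point problem $u=(1+\lambda-\lambda_0)\P u+\G(\lambda,u)$ with $\G$ completely continuous (Lemma \ref{odd}, using only $|f|\le m\in L^2$ and $\dim(X_0\oplus X_-)<\infty$), and the Leray--Schauder indices at $\lambda_0\pm\delta$ are $1$ and $(-1)^{\dim X_0}=-1$. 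Third, in case (ii) under $(SR)_\pm$ the geometric condition \eqref{25092017-2303} of Lemma \ref{25092017-1854} requires the complementary components $\bar w=\Q\bar u$ to range over a set that is relatively \emph{compact} in $L^2$, not merely bounded; producing this compactness for the $\Q$-parts of bounded backward solutions is precisely Proposition \ref{26092017-1533} (via the tail estimates of Lemma \ref{12102017-1634}), a step your outline omits. Without it the claim that a large sphere in $\Ker(\A-\lambda_0\I)$ is crossed transversally is unsupported in the strong-resonance case.
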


\begin{remark} {\em (1) It is clear that if $(\lambda_n,u_n)$ is a sequence bifurcating form infinity at $\lambda_0$, then $u_n\in H^2(\R^N)$ and  $\|u_n\|_{H^2}\to +\infty$. In Theorem \ref{s16022016-1655} we show that under the assumptions of the above theorems also both sequences $(\|u_n\|_{L^2})$ and $(\|\nabla u_n\|_{L^2})$ tend to infinity; moreover these sequences have the same growth rate.\\
\indent (2) Theorem \ref{24062015-1056} complements and  generalizes results concerning the asymptotic bifurcation for equations of the form \eqref{24062015-1032}  from \cite{Stuart1} and \cite{Kr-Sz}. In \cite{Kr-Sz} problem \eqref{24062015-1032} was studied when $V\in L^\infty(\R^N)$ (i.e., $V_0\equiv 0$) and under hypotheses which, together with the ansatz $(f_4)$ (see \cite[p. 415]{Kr-Sz}), imply  our standing assumptions with one important difference in comparison to \eqref{17022016-1449}: in the setting of \cite{Kr-Sz}, the bounding function $m\in L^\infty(\R^N)$. In \cite{Stuart1} a similar problem is very thoroughly investigated with $f(x,u)=h(x)+\tilde f(u)$, where $h\in L^2(\R^N))$ and $\tilde f(u)/u\to 0$ as $|u|\to +\infty$ (see the assumption $(G)$ in \cite{Stuart1}). In both papers the asymptotic bifurcation occurs at an eigenvalue $\lambda_0$ of ${\A}$ provided the distance $\dist(\lambda_0,\sigma_e({\A}))$ of $\lambda_0$ to $\sigma_e({\A})$, the essential spectrum of the hamiltonian, is larger than the Lipschitz constant of the nonlinearity $g$ (in \cite{Stuart1} a bit more restrictive bound is necessary). Such a condition was also implicitly contained in \cite[Assumption D]{Dancer}. If the multiplicity of $\lambda_0$ is odd, then the proofs from \cite{Stuart1,Kr-Sz} use the degree theory (via the Toland inversion in \cite{Stuart1}), while for an eigenvalue of even multiplicity the existence of asymptotic bifurcation in \cite{Kr-Sz} relies on a variational approach based on the Morse theory. In \cite{Genoud} the principal eigenvalue (being simple) of the linearization at infinity is shown to be a point of asymptotic bifurcation and the result is obtained by the Toland inversion. \\
\indent In our approach the physically relevant unbounded part $V_0$ of the potential is not trivial, but, at least in case the multiplicity of $\lambda_0$ is even, we need that $\lambda_0<\alpha_\infty$ which, as we shall see, implies that $\lambda_0$ lies below the bottom of $\sigma_e({\A})$;  observe that the spectrum $\sigma_e(-\Delta+V_\infty)\subset [\alpha_\infty,\infty)$. We do not require any relations of the distance $\dist(\lambda_0,\sigma_e({\A}))$ with the Lipschitz constant, but instead we make use of the estimate \eqref{17022016-1449}. If $V_0\neq 0$ (making $V$ look like a potential well) is sufficiently deep and steep, then $\sigma({\A})\cap (-\infty,\alpha_\infty)\neq\emptyset$ (this holds for instance if $V$ is the Coulomb type potential from Remark \ref{pocz}; see also eg. \cite[Theorem XIII.6]{ReedSimon} and \cite{Schmud}).\\
\indent (3) Our attitude to the first part of Theorem \ref{24062015-1056} is based on the Leray-Schauder degree theory; in this context condition \eqref{17022016-1450} is not necessary since the continuity of the Nemytskii operator generated by $f$ is sufficient. In the second part we shall rely on the Conley index theory applied to the semiflow generated by the parabolic equation
\begin{equation}\label{15052017-2001}
u_t=\Delta u-V(x)u+\lambda u+f(x,u),\;\; x\in\R^N,\; u\in\R,\; t>0,
\end{equation}
related to \eqref{24062015-1032}.  We shall show that  assumptions  imply that this semiflow is well-defined and its  Conley indices `at infinity' change when the parameter $\lambda$ crosses $\lambda_0$. To meet the quite demanding requirements concerning compactness issues (i.e. the so-called {\em admissibility} of the semiflow with respect to bounded sets) we  adopt some ideas of Prizzi \cite{Prizzi-FM,Prizzi}. The use of  the (generalized) Conley type index of Rybakowski \cite{rybakowski-TAMS} in the context of bifurcation has been started by Ward \cite{Ward1,Ward2} and applied for elliptic problems on bounded domains. Quite recently this approach has been thoroughly complemented and expanded in \cite{DLi} (see also the rich bibliography therein) and applied to bifurcation problems on bounded domains. To the best of our knowledge the present paper is the first one to employ Conley index to the asymptotic bifurcation for elliptic problems in $\R^N$.\hfill $\square$}\end{remark}
\indent Let us now discuss the physical context of the studied problem. We consider the externally driven {\em nonlinear Schr\"odinger equation} of the form
\be\label{nse}i\psi_t=-\Delta\psi+V(x)\psi-W'(x,\psi),\ee
and its bound states, i.e. {\em wave-functions} $\psi:[0,+\infty)\times\R^N\to\C$ that vanish at infinity; here $V$ satisfies assumptions \eqref{postac poten} and \eqref{24062015-1047},  $W:\R^N\times\C\to\R$ and $W'(x,z):=\frac{\part}{\part z_1}W(x,z)+i\frac{\part}{\part z_2}W(x,z)$, $x\in\R^N$, $z=z_1+iz_2$. One usually assumes that $W$ depends on $x\in\R^N$ and $|z|$ only, i.e. $W(x,z)=H(x,|z|)$ where $H:\R^N\times [0,+\infty)\to\R$ has the form
$$H(x,s)=\int_0^sh(x,\xi)\,d\xi,\;\;x\in\R^N,\;s\geq 0,$$
and $h:\R^N\times [0,+\infty)\to\R$ is a Carath\'eodory function satisfying conditions analogous to \eqref{17022016-1449} and \eqref{17022016-1450}. Therefore for all $x\in\R^N$
$$W'(x,z)=h(x,|z|)\frac{z}{|z|}\;\;\text{for}\;\;z\in\C\setminus\{0\},\;\; W'(x,0)=0.$$
Problems concerning \eqref{nse} play an important role in different physical contexts, especially in the description of  macroscopic quantum systems like, for instance, plasma physics, nonlinear optics and others -- see e.g. \cite{Feng}, \cite{Sulem}. For appropriate choice of $h$ the equation \eqref{nse} has {\em standing wave} solutions, i.e. satisfying the ansatz
\be\label{standing}\psi(t,x)=e^{-i\lambda t}u(x),\;\;t\geq 0,\;x\in\R^N,\ee
with the time-independent profile $u\in H^1$ and $\lambda\in\R$. Substituting \eqref{standing} into \eqref{nse} and putting for $x\in\R^N$ and $u\in\R$
\be\label{wyr f}f(x,u):= h(x,|u|)\frac{u}{|u|}\;\;\text{if}\; u\neq 0,\;\;f(x,0)=0,\ee we get \eqref{24062015-1032} along with our standing assumptions; clearly any solution $(\lambda,u)\in\R\times H^1$ gives via \eqref{standing} a bound state $\psi$ for \eqref{nse}.\\
\indent  The {\em energy} (see \cite{Benci}) of a wave-function $\psi$ satisfying \eqref{nse}, given by $$E(\psi):=\frac{1}{2}\int_{\R^N}(|\nabla\psi|^2+V(x)|\psi|^2)\,dx-\int_{\R^N} W(x,\psi)\,dx$$
is time invariant and, in case \eqref{standing},
$$E(\psi)=\frac{1}{2}\int_{\R^N}(|\nabla u|^2+V(x)u^2)\,dx-\int_{\R^N}H(x,|u|)\,dx.$$
\begin{theorem}\label{physics} Suppose that $\lma_0<\alpha_\infty$, where $\alpha_\infty$ is given by \eqref{def alfa}, $\lambda_0\in\sigma(-\Delta+V)$ and  one of the following conditions is satisfied:\\
\indent $(i)_+$ for a.a. $x\in\R^N$, $\check{h}(x):=\liminf_{\xi\to +\infty}h(x,\xi)\geq 0$ and $\check{h}$ is positive on a set of positive measure;\\
\indent $(i)_-$ for a.a. $x\in\R^N$, $\hat{h}(x):=\limsup_{\xi\to +\infty}h(x,\xi)\leq 0$ and $\hat{h}$ is negative on a set of positive measure;\\
\indent $(ii)_+$ for a.a. $x\in\R^N$ and all $\xi\geq 0$, $h(x,\xi)\geq 0$ and $\lim_{\xi\to +\infty}\xi h(x,\xi)$ is positive on a set of positive measure;\\
\indent $(ii)_-$ for a.a. $x\in\R^N$ and all $\xi\geq 0$, $h(x,\xi)\leq 0$ and $\lim_{\xi\to +\infty}\xi h(x,\xi)$ is negative on a set of positive measure.\\
\indent Then there is a sequence $(\psi_n)$ of bound states of \eqref{nse} of the form
$\psi_n(t,x)=e^{-i\lambda_n t}u_n(x)$ for $t\geq 0$, $x\in\R^N$, where $\lambda_n\in\R$, $u_n\in H^1$ for all $n\geq 1$, $\lambda_n\to\lambda_0$ and $\|u_n\|_{H^1}\to +\infty$. If $\lambda_0\neq 0$, then $|E(\psi_n)|\to +\infty$.
\end{theorem}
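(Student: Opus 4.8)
The plan is to reduce the assertion to the elliptic problem \eqref{24062015-1032} and invoke Theorem~\ref{24062015-1056}(ii). As already recorded above, substituting the standing-wave ansatz \eqref{standing} into \eqref{nse} and using \eqref{wyr f} shows that $\psi(t,x)=e^{-i\lambda t}u(x)$ is a bound state of \eqref{nse} if and only if $(\lambda,u)\in\R\times H^1$ solves \eqref{24062015-1032} with $f(x,u)=h(x,|u|)u/|u|$ for $u\neq0$ and $f(x,0)=0$; by the hypotheses on $h$ this $f$ inherits the standing assumptions \eqref{17022016-1449}--\eqref{17022016-1450}. Since $\lambda_0<\alpha_\infty$ and $\lambda_0\in\sigma(\A)$, it will suffice to check that each of $(i)_\pm$, $(ii)_\pm$ forces one of $(LL)_\pm$ or $(SR)_\pm$ for $f$; Theorem~\ref{24062015-1056}(ii) then yields $(\lambda_n,u_n)$ with $\lambda_n\to\lambda_0$, $u_n$ a weak solution for $\lambda=\lambda_n$, and $\|u_n\|_{H^1}\to+\infty$, and $\psi_n(t,x):=e^{-i\lambda_n t}u_n(x)$ are the desired bound states.

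The translation is a direct sign computation. Since $f(x,s)=h(x,s)$ for $s>0$ and $f(x,s)=-h(x,-s)$ for $s<0$, one reads off
\be \check f_+=\check h,\quad \hat f_+=\hat h,\quad \hat f_-=-\check h,\quad \check f_-=-\hat h, \ee
so that $(i)_+$ (i.e. $\check h\geq0$ a.e., positive on a set of positive measure) is precisely $(LL)_+$, while $(i)_-$ (i.e. $\hat h\leq0$, negative on such a set) is precisely $(LL)_-$; in each case the sign identities show that wherever one of the two relevant functions is nonzero, so is the other. Likewise $sf(x,s)=|s|\,h(x,|s|)$ for all $s\in\R$, whence $sf(x,s)\geq0$ under $(ii)_+$ and $\leq0$ under $(ii)_-$, and $k_+(x)=k_-(x)=\lim_{\xi\to+\infty}\xi h(x,\xi)$; thus $(ii)_+\Rightarrow(SR)_+$ and $(ii)_-\Rightarrow(SR)_-$, the membership $k_\pm\in L^\infty$ being part of the standing hypotheses on $h$. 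With the relevant resonance condition in force, Theorem~\ref{24062015-1056}(ii) applies and produces $(\psi_n)$.

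It then remains to prove the energy blow-up when $\lambda_0\neq0$. Testing $-\Delta u_n+V(x)u_n=\lambda_n u_n+f(x,u_n)$ against $u_n$ gives $\int_{\R^N}(|\nabla u_n|^2+Vu_n^2)\,dx=\lambda_n\|u_n\|_{L^2}^2+\int_{\R^N}f(x,u_n)u_n\,dx$, and since $f(x,u)u=|u|\,h(x,|u|)$ the energy reads
\be E(\psi_n)=\frac{\lambda_n}{2}\|u_n\|_{L^2}^2+\int_{\R^N}\Big(\tfrac12|u_n|\,h(x,|u_n|)-H(x,|u_n|)\Big)\,dx. \ee
Writing $\tilde m\in L^2(\R^N)$ for the bound $|h(x,\xi)|\leq\tilde m(x)$, both $|u_n|\,|h(x,|u_n|)|$ and $|H(x,|u_n|)|\leq|u_n|\,\tilde m(x)$, so the integrand is dominated by $\tfrac32|u_n|\,\tilde m(x)$ and, by Cauchy--Schwarz, the integral is $O(\|u_n\|_{L^2})$. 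By Theorem~\ref{s16022016-1655} one has $\|u_n\|_{L^2}\to+\infty$, while $\lambda_n\to\lambda_0\neq0$, so the quadratic term dominates the $O(\|u_n\|_{L^2})$ remainder and $|E(\psi_n)|\to+\infty$.

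The routine part is the sign bookkeeping; the only genuine inputs are that the full machinery of Theorem~\ref{24062015-1056} applies and, for the energy, that the blow-up is already in the $L^2$-norm rather than merely in $H^1$. The latter is exactly what makes the quadratic term win, and it is supplied by Theorem~\ref{s16022016-1655}; were that unavailable, I would instead argue directly that $\|u_n\|_{L^2}\to+\infty$, using the relative boundedness of $V_0$ with respect to $-\Delta$ in the tested identity to rule out the gradient norm carrying all the growth while the $L^2$-norm stays bounded.
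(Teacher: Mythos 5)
Your proposal is correct and follows essentially the same route as the paper: reduce to Theorem \ref{24062015-1056}(ii) by checking that $(i)_\pm\Rightarrow(LL)_\pm$ and $(ii)_\pm\Rightarrow(SR)_\pm$ for $f(x,u)=h(x,|u|)u/|u|$, then test the equation against $u_n$ to write $E(\psi_n)=\tfrac{\lambda_n}{2}\|u_n\|_{L^2}^2+O(\|u_n\|_{L^2})$ and conclude via $\|u_n\|_{L^2}\to+\infty$ (which, as you note, comes from Theorem \ref{s16022016-1655} since $\lambda_0<\alpha_\infty$ puts $\lambda_0$ off the essential spectrum). Your explicit sign bookkeeping and the remark that the $L^2$-blow-up is the decisive input are merely more detailed versions of what the paper leaves implicit.
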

\noindent {\em Proof:} It is easy to see that if $f$ is given by \eqref{wyr f}, then condition
$(i)_\pm$ (resp. $(ii)_\pm$) implies $(LL)_\pm$ (resp. $(SR)_\pm$); hence, in view of Theorem \ref{24062015-1056}, there is a sequence $(\lambda_n,u_n)$ of solutions to  \eqref{24062015-1032}, yielding the existence of the required sequence of bound states. Observe that
$$E(\psi_n)=\frac{1}{2}\left(\lambda_n\|u_n\|^2_{L^2}+\int_{R^N}
(h(x,|u_n|)|u_n|-2H(x,|u_n|))\,dx\right)\geq
\frac{1}{2}\lambda_n\|u_n\|^2_{L^2}-2\|m\|_{L^2}\|u_n\|_{L^2}\to +\infty$$
when $\lambda_0>0$ and $E(\psi_n)\to-\infty$ if $\lambda_0<0$.\hfill $\square$

\indent The paper is organized as follows. Section 2 is devoted to basic notation and a brief exposition of the Conley index theory. In Section 3  we construct the semiflow related to the considered problem, study its basic properties such as continuity and admissibility; we also recall a linearizaton method to compute the Conley index of the set of bounded trajectories. Section 4 deals with necessary conditions as well as further properties of bifurcating sequences. Finally, Section 5 is devoted to the proof of the main results.

\section{Preliminaries}
 By $L^p(\Omega)$, $1\leq p\leq \infty$, and $H^{k}(\Omega)$, $k\in\N$, we denote the standard Lebesgue and Sobolev spaces on an open domain $\Omega\subset \R^N$, $N\geq 1$, with their standard norms and inner products. For brevity, in the sequel we will write $L^p$ or $H^k$ instead of $L^p(\R^N)$ and $H^k(\R^N)$.\\
 \indent If $(X,A)$ is a topological pair with a closed and nonempty $A\subset X$, then $X/A$ denotes the quotient space, obtained by collapsing the subset $A$ to a point $[A]$. Pointed spaces $(X,x_0)$ and $(Y,y_0)$ are {\em homotopy equivalent} or have {\em the same homotopy type} if there are pointed maps $f\colon (X,x_0)\to (Y,y_0)$ and $g\colon (Y,y_0)\to (X,x_0)$ such that $f\circ g$ (resp. $g\circ f$) is homotopic to the identity on $(Y,y_0)$ (resp. on $(X,x_0)$). The homotopy class represented by a space $(X,x_0)$ is denoted by $[(X,x_0)]$.

\subsection{Conley index due to Rybakowski}
We shall briefly recall a version of the Conley index due to Rybakowski (see \cite{rybakowski} or \cite{rybakowski-TAMS}). Let $\Phi\colon [0,+\infty)\times X\to X$ be a  semiflow on a complete metric space $X$. We write $\Phi_t(x):=\Phi(t,x)$ and $\Phi_{[0,t]}(x):=\{\Phi_s(x)\mid 0\leq s\leq t\}$ for $t\geq 0$, $x\in X$. A continuous $u\colon J\to X$, where $J\subset \R$ is an interval, is a {\em solution of $\Phi$} if $u(t+s) = \Phi_t(u(s))$ for all $t\geq 0$ and $s\in J$ such that $t+s\in J$. If, in addition  $0\in J$ and $u(0)=x$, then $u$ is  a {\em solution through} $x$.\\
\indent If $a\in\R$ and $u\colon [a,+\infty)\to X$ is a solution of $\Phi$, then the {\em $\omega$-limit set} of $u$ is defined by
$$\omega(u):=\{x=\lim_{n\to\infty} u(t_n)\mid t_n\geq a,\; t_n\to +\infty\};$$
if $u\colon (-\infty, a]\to X$ is a solution of $\Phi$, then the {\em $\alpha$-limit set} of $u$ is defined by
$$
\alpha (u):=\{x=\lim_{n\to\infty} u(t_n)\mid t_n\leq a,\;t_n\to -\infty\}.$$
Note that both sets $\omega(u)$ and $\alpha(u)$ are closed.\\
\indent Let $N\subset X$. We define the \emph{invariant part} $\mathrm{Inv}_\Phi (N)$ of $N$ by
$$x\in \mathrm{Inv}_\Phi (N)\,\Longleftrightarrow\,\text{there is a solution}\;u\colon \R\to N\;\text{through}\;x.$$
A set $K\subset X$ is a {\em $\Phi$-invariant} or  {\em invariant} (w.r.t. $\Phi$) if  $\mathrm{Inv}_\Phi (K)=K$. A set $K$ is an {\em isolated invariant} if there exists an {\em isolating neighborhood} of $K$, i.e. $N\subset X$ such that  $K=\mathrm{Inv}_\Phi (N) \subset \mathrm{int}\,N$.\\
\indent  A set $N\subset X$ is \emph{$\Phi$-admissible} or {\em admissible} (w.r.t. $\Phi$) if, for any sequences $(t_n)$ in $[0,+\infty)$, $(x_n)$ in $X$ such that $t_n\to +\infty$ and $\Phi_{[0,t_n]} (x_n) \subset N$, the sequence of end-points $\left( \Phi_{t_n}(x_n) \right)$ has a convergent subsequence. It is easy to see that if $N\subset X$ is $\Phi$-admissible, then the invariant part $\mathrm{Inv}_\Phi (N)$ is compact.

Suppose that $\{\Phi^\lambda\}_{\lambda\in \Lambda}$, where $\Lambda$ is a metric space, is a family of semiflows on $X$. This family is {\em continuous} if the map $[0,+\infty)\times X\times \Lambda\ni (t,x,\lambda)\mapsto\Phi^\lambda_t(x)$ is continuous. A set $N\subset X$ is
{\em admissible} w.r.t. $\{\Phi^\lambda\}$ if, for any sequences $(t_n)$ in $[0,+\infty)$, $(x_n)$ in $X$ and $(\lambda_n)$  such that $t_n\to +\infty$, $\lambda_n\to\lambda_0$ in $\Lambda$ and $\Phi^{\lambda_n}_{[0,t_n]}(x_n)\subset N$, the sequence $(\Phi^{\lambda_n}_{t_n}(x_n))$ has a convergent subsequence.

Let ${\mathcal I}(X)$ be the family of all pairs $(\Phi, K)$, where $\Phi$ is a semiflow on $X$ and a set  $K\subset X$ is  isolated invariant w.r.t. $\Phi$  having a  $\Phi$-admissible isolating neighborhood. If $(\Phi,K)\in {\mathcal I}(X)$, then the Conley {\em homotopy index} $h(\Phi,K)$ of $K$ relative to $\Phi$ is defined by
$$h(\Phi, K):=[(B/B^-, [B^-])],$$
where $B$ is an {\em isolating block} of $K$ (relative to $\Phi$; see \cite{rybakowski} for the details) with the {\em exit set} $B^- \neq \emptyset$; if $B^-=\emptyset$ we put
$h(\Phi, K):=[(B\cup \{ a\}, a)]$ where $a$ is an arbitrary point out of $B$. In particular, $h(\Phi, \emptyset)=\overline 0$ where $\overline 0:=[(\{a\},a)]$.

Let us enumerate several important properties of homotopy index:
\begin{enumerate}
\item[(H1)]
for any $(\Phi, K)\in {\mathcal I}(X)$, if $h(\Phi, K)\neq \overline{0}$, then $K\neq \emptyset$;
\item[(H2)] if $(\Phi, K_1), (\Phi, K_2)\in {\mathcal I}(X)$ and $K_1\cap K_2=\emptyset$, then $(\Phi, K_1\cup K_2)\in {\mathcal I}(X)$
and $h(\Phi, K_1\cup K_2) = h(\Phi, K_1)\vee h(\Phi, K_2)$;
\item[(H3)] for any $(\Phi_1, K_1)\in {\mathcal I}(X_1)$ and $(\Phi_2, K_2)\in {\mathcal I}(X_2)$, $(\Phi_1\times \Phi_2, K_1\times K_2)\in {\mathcal I} (X_1\times X_2)$
and $h(\Phi_1\times \Phi_2, K_1\times K_2) = h(\Phi_1, K_1) \wedge h(\Phi_2, K_2)$;
\item[(H4)] if the family of semiflows $\{\Phi^\lambda\}_{\lambda\in [0,1]}$ is continuous and there exists an admissible (with respect to this family) $N$ such that $K_\lambda = \mathrm{Inv}_{\Phi^\lambda} (N) \subset \mathrm{int}\ N$, $\lambda\in [0,1]$, then
$$
h(\Phi^0, K_0) = h(\Phi^1, K_1).
$$
\end{enumerate}
\indent In a linear case the following formula for computation of the Conley index is used.
\begin{theorem} \label{11052015-1744}{\em (See \cite[Ch. I, Th. 11.1]{rybakowski})}
Assume that a $C_0$ semigroup $\{T(t)\}_{t\geq 0}$ of bounded linear operators on a Banach space $X$ is hyperbolic {\em (}see e.g. \cite[Def. V.1.14]{Engel}{\em )}. If the dimension $\dim X_u=k$ of the unstable subspace $X_u$ {\em (\footnote{The unstable space $X_u$ is equal to
$\Ker P$, where $P$ is the spectral projection corresponding to $\{\lambda\in\sigma(T(t_0))\mid |\lambda|<1\}$ for some $t_0>0$, or the closed subspace in $X$ corresponding $\{\lambda\in\sigma(A)\mid\Re\lambda<0\}$, where $A$ is the generator of $\{T(t)\}$.})} is finite, then $\Phi\colon [0,+\infty) \times X\to X$, given by $\Phi(t,x):=T(t)x$ for $x\in X$ and $t\geq 0$, is a semiflow on $X$, $\{ 0\}$ is the maximal bounded invariant set with respect to $\Phi$, $(\Phi, \{ 0 \})\in {\mathcal I}(X)$ and $h(\Phi, \{ 0\})=\Sigma^k$ where $\Sigma^k=[(S^k, \overline s)]$  is the homotopy type of the pointed $k$-dimensional sphere.\hfill $\square$
\end{theorem}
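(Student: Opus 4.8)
The plan is to exploit the exponential dichotomy furnished by hyperbolicity and to reduce the index computation to two model situations, a linear source in finite dimension and a linear sink, by means of the product property (H3).

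First I would record that $\Phi$ is a semiflow: the identities $\Phi_0=\mathrm{id}$ and $\Phi_{t+s}=\Phi_t\circ\Phi_s$ are just $T(0)=I$ and $T(t+s)=T(t)T(s)$, while joint continuity of $(t,x)\mapsto T(t)x$ follows from strong continuity together with the local bound $\|T(t)\|\leq Me^{\omega t}$ valid for any $C_0$ semigroup, since $\|T(t_n)x_n-T(t)x\|\leq\|T(t_n)\|\,\|x_n-x\|+\|T(t_n)x-T(t)x\|\to0$. Hyperbolicity provides a $T(t)$-invariant splitting $X=X_s\oplus X_u$ with bounded projections $P_s,P_u$ commuting with every $T(t)$ and $\dim X_u=k$, so that $\|T(t)x\|\leq Me^{-\beta t}\|x\|$ for $x\in X_s$, $t\geq0$, with some $\beta>0$, whereas $T(t)$ restricts on $X_u$ to an invertible group with $\|T(t)^{-1}x\|\leq Me^{-\beta t}\|x\|$ for $x\in X_u$, $t\geq0$.

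Next I would identify the maximal bounded invariant set. For a bounded full solution $u\colon\R\to X$ write $u=u_s+u_u$ with $u_s=P_su$ and $u_u=P_uu$; from $u(t)=T(t-s)u(s)$ ($t\geq s$) and invariance of the splitting, $u_u(s)=T(t-s)^{-1}u_u(t)$, so $\|u_u(s)\|\leq Me^{-\beta(t-s)}\sup_\tau\|u(\tau)\|\to0$ as $t\to+\infty$, while $\|u_s(t)\|\leq Me^{-\beta(t-s)}\sup_\tau\|u(\tau)\|\to0$ as $s\to-\infty$; hence $u\equiv0$ and $\{0\}$ is the maximal bounded invariant set. With $N:=\overline{B}(0,r)$ this gives $\mathrm{Inv}_\Phi(N)=\{0\}\subset\mathrm{int}\,N$, so $\{0\}$ is isolated invariant. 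The one genuinely non-formal point, which I expect to be the main obstacle, is admissibility of $N$ in the absence of compact balls: if $t_n\to+\infty$ and $\Phi_{[0,t_n]}(x_n)\subset N$, then $T(t_n)x_n=P_sT(t_n)x_n+P_uT(t_n)x_n$ with $\|P_sT(t_n)x_n\|=\|T(t_n)P_sx_n\|\leq Me^{-\beta t_n}\|P_s\|\,r\to0$, whereas $P_uT(t_n)x_n=P_u\Phi_{t_n}(x_n)$ remains in the bounded set $\{y\in X_u:\|y\|\leq\|P_u\|r\}$ of the finite-dimensional space $X_u$; thus $(\Phi_{t_n}(x_n))$ has a convergent subsequence. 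This is exactly the step where finite-dimensionality of $X_u$ compensates for the non-compactness of balls in $X$.

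Finally I would compute the index through the product property (H3) applied to $\Phi=\Phi_s\times\Phi_u$, the restrictions of $\Phi$ to $X_s$ and $X_u$, each a linear semiflow whose maximal bounded invariant set is $\{0\}$ and which is admissible on balls by the same estimates, so that $(\Phi_s,\{0\})\in\mathcal I(X_s)$ and $(\Phi_u,\{0\})\in\mathcal I(X_u)$. After passing to a Lyapunov norm adapted to the dichotomy, for the sink $\Phi_s$ a closed ball $\overline{B}(0,r)\subset X_s$ becomes an isolating block with empty exit set, whence $h(\Phi_s,\{0\})=\Sigma^0=[(S^0,\overline s)]$; for the source $\Phi_u$ on the $k$-dimensional $X_u$ every boundary point of $\overline{B}(0,r)$ is an immediate strict egress point, so the exit set is the whole sphere $S^{k-1}$ and $\overline{B}(0,r)/S^{k-1}\cong S^k$, giving the classical value $h(\Phi_u,\{0\})=\Sigma^k$. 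Combining these via (H3) and using $\Sigma^0\wedge\Sigma^k=\Sigma^k$ yields $h(\Phi,\{0\})=\Sigma^0\wedge\Sigma^k=\Sigma^k$. The remaining verifications, that the balls are genuine isolating blocks in Rybakowski's sense (strict transversality of egress, guaranteed by the adapted norms) and that the $B^-=\emptyset$ convention indeed returns $\Sigma^0$ for a contractible block, are routine.
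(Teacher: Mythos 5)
The paper offers no proof of this statement: it is quoted as a black box from \cite[Ch. I, Th. 11.1]{rybakowski}, so there is nothing in the text to compare against line by line. Your argument is a correct reconstruction of the standard proof of that result: the exponential dichotomy forces every bounded full solution to vanish (backward invertibility on $X_u$ kills the unstable part, decay on $X_s$ kills the stable part), admissibility of closed balls holds because the non-decaying component of $\Phi_{t_n}(x_n)$ lies in a bounded subset of the finite-dimensional space $X_u$, and the index is obtained by writing $\Phi$ as the product of a sink on $X_s$ and a source on $X_u$ and applying (H3) with $\Sigma^0\wedge\Sigma^k=\Sigma^k$. The one step that genuinely needs care --- closed balls need not be isolating blocks in the original norm when the dichotomy constant $M>1$ --- you correctly handle by passing to an adapted Lyapunov norm, so I see no gap.
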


\section{Admissibility and compactness properties of semiflow}

Let us consider problems \eqref{24062015-1032} in its  abstract form
\be\label{pr1} (\A-\lambda\I)u=\F(u),\;\;u\in H^2,\;\lambda\in\R,\ee
where $\I$ is the identity on $L^2$,
with the linear operator ${\A}:D({\A})\subset L^2 \to L^2$ given by
\begin{align}\label{operator 0}
&D(\A):=H^2(\R^N),\;\; \A:=\A_0+\V_0+\V_\infty,\;\; \text{where:}\\
\label{operator 1}&\A_0u:=-\Delta u,\;\;\text{i.e.,}\;\; \A_0u:=-\sum_{j=1}^N\frac{\part^2u}{\part x_j^2}\;\; \text{for}\;\; u\in D(\A_0)=D(\A),\\
\label{operator 2}&\V_\infty u:=V_\infty\cdot u\;\; \text{for}\;\; u\in D(\V_\infty):=L^2\;\; \text{and}\\
\label{operator 2'}&{\V}_0u=V_0\cdot u\;\; \text{for}\;\; u\in D({\V}_0):=L^q,\;\; \text{where}\;\;q\;\;\text{is given by \eqref{pocz1} below};\end{align}
and ${\F}:H^1\to L^2$ is the superposition operator generated by $f$, i.e.:
\be\label{nielin}{\F}(u):= f(\cdot,u(\cdot)),\;\; \text{for}\;\;  u\in L^2.\ee
\indent Let us discuss the above abstract setting.
\begin{remark}\label{spectra} {\em (1) By \cite[Th. 7.3.5]{Pazy}, ${\A}_0$ is  self-adjoint and sectorial. Clearly ${\V}_\infty$ is a bounded linear operator. By \cite[Proposition III.1.12]{Engel} ${\A}_0+{\V}_\infty$, defined on $D({\A}_0+{\V}_\infty)=D({\A})$, is sectorial, too. By the Kato-Rellich theorem (see \cite[Theorem 8.5]{Schmud}) it is self-adjoint. It is also clear that
$$s_\infty:=\inf\sigma({\A}_0+{\V}_\infty)=\inf_{u\in H^1,\,\|u\|_{L^2}=1}\int_{\R^N}(|\nabla u|^2+V_\infty(x)u^2)\,dx,$$
i.e. $\sigma({\A}_0+{\V}_\infty)\subset [s_\infty,+\infty)$. In view of the Persson theorem \cite[Theorem 2.1]{Persson} we have
that
$$s_\infty^*:=\inf\sigma_e({\A}_0+{\V}_\infty)=\lim_{R\to\infty}\inf\left\{\int_{\R^N}(|\nabla u|^2+V_\infty(x)u^2)\,dx\mid u\in C^\infty_0(\{|x|\geq R\}),\,\|u\|_{L^2}=1\right\}.$$
It is immediate to see that $\alpha_\infty\leq s^*_\infty$. Therefore
\be\label{def alfa 1}\sigma_e({\A}_0+{\V}_\infty)\subset [\alpha_\infty,+\infty).\ee
At most instances
$\alpha_\infty<s^*_\infty$ (see \cite{Persson}); if, however, $\lim_{R\to\infty}\esssup_{|x|\geq R}|V_\infty(x)-\alpha_\infty|=0$, then $\sigma_e({\A}_0+{\V}_\infty)=[\alpha_\infty,+\infty)$. \\
\indent (2) Let $p$ be as in \eqref{24062015-1047} and let
\be\label{pocz1}q:=\frac{2p}{p-2}\;\; \text{if}\;\; p>2,\;\; q:=\infty\;\; \text{for}\;\; p=2.\ee
Observe that, in view of the Sobolev embeddings (see \cite[Theorem 4.12]{Adams}), our assumptions imply that for any $N\geq 1$, $H^1\hookrightarrow L^q$ (continuous embeddings) and, in view of the Rellich-Kondrachov theorem (see \cite[Theorem 6.3]{Adams}), $H^2(\Omega)$ is compactly embedded in $L^q(\Omega)$ provided $\Omega\subset\R^N$ is a smooth bounded domain.\\
\indent (3)  By the above, $H^1\hookrightarrow D({\V}_0)=L^q$. In view of the H\"older inequality ${\V}_0$ is well-defined and, as the operator $L^q\to L^2$, continuous. It is symmetric, hence, closable. In view of Lemma \ref{zw} below, ${\V}_0$ is relatively $({\A}_0+{\V}_\infty)$-compact.  Therefore, by \cite[Corollary III.2.17 (ii)]{Engel}, ${\A}$ is sectorial and, in view of \cite[Proposition 8.14 (ii), Theorem 8.5]{Schmud}, ${\A}$ is self-adjoint; see also \cite[Corollary XIII.4.2]{ReedSimon}. Hence $\sigma({\A})\subset \R$.\\
\indent (4) The relative compactness of ${\V}_0$  w.r.t. ${\A}_0+{\V}_\infty$ implies, in view of the Weyl theorem (see e.g. \cite [Theorem 1.4.6]{Schechter} or \cite[Theorem 8.15]{Schmud}) and \eqref{def alfa 1}, that
\be\label{spectra 1}\sigma_e({\A})=\sigma_e({\A}_0+{\V}_\infty)\subset [\alpha_\infty,+\infty).\ee
Therefore
$\sigma({\A})\cap (-\infty,\alpha_\infty)$ is contained in the discrete part of the spectrum $\sigma_d(\A)$; hence it consists of at most countable number of isolated eigenvalues with finite multiplicity.\\
\indent (5) Observe that in view of \eqref{17022016-1449} $\bf F$ is well-defined and continuous as an operator $L^2\to L^2$ since
\begin{equation}\label{w2}
\|{\F}(u)\|_{L^2}\leq \|m\|_{L^2},\;\; u\in L^2,
\end{equation}
and, by \eqref{17022016-1450},
\begin{equation}\label{w3}
\|{\F}(u)-{\F}(v)\|_{L^2}\leq
\|(l_0+l_\infty)|u-v|\|_{L^2}\leq \|l_0\|_{L^p}\|u-v\|_{L^q}+\|l_\infty\|_{L^\infty}\|u-v\|_{L^2}
\leq L \|u-v\|_{H^1},
\end{equation}
for $u,v\in H^1$, with an appropriately chosen Lipschitz constant $L$. Clearly, if $u\in H^1$, then $u\in L^2\cap L^q$ and $\max\{\|u\|_{L^2},\|u\|_{L^q}\}\leq \c\|u\|_{H^1}$ (\footnote{Here and below by $\c$ we denote an appropriate constant for which the given inequality holds; therefore $\c$ may vary from one inequality to another.}). Hence $\F$ is Lipschitz continuous as a map $H^1\to L^2$.\\
\indent (6) By \cite[Theorem 3.3.3]{Henry} (comp. \cite[Chapter 3]{Dlotko}), the sectoriality of ${\A}$, conditions \eqref{w2} and \eqref{w3} imply that for each $\bar u\in H^1$ and $\lambda\in\R$ there is a unique global solution $u$ of
\begin{align}\label{w1}
&\dot u= - {\A}u+\lambda u + {\F}(u),\;\; t>0,\;\lambda\in\R,\;u\in H^1,
\end{align}
i.e. a continuous function $u=u(\cdot;\bar u,\lambda):[0,+\infty)\to H^1$ such that $u\in C((0,+\infty),H^2)\cap C^1((0,+\infty),L^2)$, $u(0)=\bar u$ and \eqref{w1} holds for all $t>0$. \hfill $\square$}\end{remark}

\begin{lemma}\label{zw} The operator ${\V}_0$ is relatively $({\A}_0+{\V}_\infty)$-compact, i.e. $D({\A}_0+{\V}_\infty)\subset D({\V}_0)$ and ${\V}_0$ is compact as a map on $D({\A}_0+{\V}_\infty)$ endowed with the graph-norm.
\end{lemma}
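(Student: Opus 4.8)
The plan is to prove directly that $\V_0$, viewed as a map from $H^2=D(\A_0+\V_\infty)$ into $L^2$, is compact. Since $\V_\infty$ is bounded, the graph norm of $\A_0+\V_\infty$ is equivalent to the graph norm of $\A_0$, and the latter is equivalent to the $H^2$-norm because $\|(\A_0+\I)u\|_{L^2}=\|(-\Delta+1)u\|_{L^2}\sim\|u\|_{H^2}$; hence compactness with respect to the graph norm is the same as compactness of $\V_0\colon H^2\to L^2$. The inclusion $D(\A_0+\V_\infty)=H^2\subset D(\V_0)=L^q$ follows from the Sobolev embedding $H^2\hookrightarrow H^1\hookrightarrow L^q$ recalled in Remark \ref{spectra}(2), and H\"older's inequality with $\tfrac12=\tfrac1p+\tfrac1q$ gives $\|\V_0 u\|_{L^2}\leq\|V_0\|_{L^p}\|u\|_{L^q}\leq\c\,\|V_0\|_{L^p}\|u\|_{H^1}$, so $\V_0\colon H^2\to L^2$ is bounded.

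Next I would approximate $V_0$ in the $L^p$-norm by bounded, compactly supported potentials. For $R,M>0$ set $V_0^{R,M}:=V_0\,\chi_{\{|x|\leq R,\ |V_0(x)|\leq M\}}$; by the absolute continuity of the $L^p$-norm (dominated convergence) one has $\|V_0-V_0^{R,M}\|_{L^p}\to 0$ as $R,M\to\infty$. Writing $T_{R,M}\colon H^2\to L^2$ for the multiplication $u\mapsto V_0^{R,M}\cdot u$, the same H\"older estimate as above yields
\[
\|\V_0 u-T_{R,M}u\|_{L^2}\leq\|V_0-V_0^{R,M}\|_{L^p}\,\|u\|_{L^q}\leq\c\,\|V_0-V_0^{R,M}\|_{L^p}\,\|u\|_{H^2},
\]
so that $T_{R,M}\to\V_0$ in the operator norm of $\mathcal L(H^2,L^2)$. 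Since the compact operators form a closed subspace of $\mathcal L(H^2,L^2)$, it then suffices to prove that each $T_{R,M}$ is compact.

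For the compactness of $T_{R,M}$ I would factor it through a bounded domain. Fix a smooth bounded domain $\Omega\supset\{|x|\leq R\}$; the restriction $H^2(\R^N)\to H^2(\Omega)$ is continuous, the embedding $H^2(\Omega)\hookrightarrow L^q(\Omega)$ is compact by the Rellich--Kondrachov theorem (Remark \ref{spectra}(2)), and, since $V_0^{R,M}$ is supported in $\Omega$ and bounded by $M$, multiplication by $V_0^{R,M}$ maps $L^q(\Omega)$ boundedly into $L^2(\R^N)$ (extending by zero). Thus $T_{R,M}$ is a composition of bounded maps with one compact factor, hence compact, and letting $R,M\to\infty$ shows that $\V_0$ is compact.

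The genuine difficulty is that $\R^N$ is unbounded, so Rellich--Kondrachov is not available globally; the role of the truncation is precisely to confine the operator to a bounded domain, where compactness holds, while the tail is rendered negligible in operator norm using that $\V_0$ acts continuously from $L^q$ into $L^2$ with a constant proportional to $\|V_0\|_{L^p}$. Once the splitting and the two estimates are in place the conclusion is immediate; the only points requiring some care are the equivalence of the graph norm with the $H^2$-norm and the verification that $\|V_0-V_0^{R,M}\|_{L^p}\to 0$.
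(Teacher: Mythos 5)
Your proof is correct and uses essentially the same mechanism as the paper's: a spatial cutoff of $V_0$, the H\"older estimate $\|V_0u\|_{L^2}\leq\|V_0\|_{L^p}\|u\|_{L^q}$ combined with the vanishing of $\bigl(\int_{\{|x|\geq k\}}|V_0|^p\,dx\bigr)^{1/p}$ to control the tail, and the Rellich--Kondrachov compactness of $H^2(\Omega)\hookrightarrow L^q(\Omega)$ on the bounded piece. The only difference is packaging -- you present it as an operator-norm limit of compact truncated operators (with an additional, harmless truncation in the values of $V_0$), whereas the paper runs the equivalent sequential total-boundedness argument directly on the image of a bounded $H^2$-sequence.
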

\noindent {\em Proof.} In view of Remark \ref{spectra} (2), $D({\A}_0+{\V}_\infty)=H^2(\R^N)\subset L^q=D({\V}_0)$. Assume that a sequence $(u_n)_{n=1}^\infty$ is bounded in the $H^2$ sense, i.e. $\sup\|u_n\|_{H^2}\leq R$ for some $R>0$. Clearly $\sup\|u_n\|_{L^q}\leq\c R$.
Let $v_n:={\V}_0u_n$, $n\geq 1$; we will show that the set $\{v_n\}_{n=1}^\infty$ is precompact in $L^2$. Take an arbitrary $\eps>0$. For any $n, k\geq 1$,
\be\label{pocz2}\int_{\{|x|\geq k\}}v_n^2\,dx\leq \left(\int_{\{|x|\geq k\}}|V_0|^p\,dx\right)^{2/p}\left(\int_{\{|x|\geq k\}}|u_n|^q\,dx\right)^{2/q}\leq \c R^q\left(\int_{\{|x|\geq k\}}|V_0|^p\,dx\right)^{2/p}<\eps^2\ee
provided $k$ is large enough. Take such $k$, let $B:=\{x\in\R^N\mid |x|<k\}$ and $u_n'=u_n|_B$, $n\geq 1$. Then $u_n'\in H^2(B)$, $(u_n')$ is bounded in $H^2(B)$ and, in view of the compactness of the embedding $H^2(B)\subset L^q(B)$, without loss of generality we may assume that $u_n'\to u_0'$ in  $L^q(B)$ as $n\to\infty$. For $n\geq 0$ let
$$w_n=\begin{cases}V_0u_n\;\;&\text{on}\;\;B,\\
0\;\;&\text{on}\;\;\R^N\setminus B.\end{cases}$$
Then $w_n\to w_0$ in $L^2$ and, by \eqref{pocz2}, $\|v_n-w_n\|_{L^2}<\eps$. It follows that $\{v_n\}_{n=1}^\infty$ is precompact.\hfill $\square$
\begin{remark}\label{comp}{\em (1) The above argument shows actually that ${\V}_0$ is relatively $({\A}_0+{\V}_\infty)$-compact if $p\geq 2$ for $N\leq 3$ and $p>N/2$ for $N\geq 3$; comp. \cite[Theorem 8.19]{Schmud}. The restrictions put on $p$ in \eqref{24062015-1047} are necessary to ensure that $H^1\subset L^q$.\\
\indent (2) An argument similar to the one used in the above proof shows that a bounded subset $M\subset H^1$ is relatively compact in $L^2$ provided  for any $\eps>0$ there is $R>0$ such  that
$$\forall\,u\in M\;\;\;\;\; \int_{\{|x|\geq R\}}|u(x)|^2 \,dx < \eps.\eqno\square$$
}\end{remark}


In view of Remark \ref{spectra} (6), for any $\lambda\in\R$, we are in a position to define $\Phi^\lambda:[0,\infty)\times H^1\to H^1$ by putting
\be\label{wzor semiflow}\Phi^\lambda_t(\bar u):=u(t;\bar u,\lambda),\;\;\bar u\in H^1,\;t\geq 0.\ee
It is immediate to see that $\Phi^\lambda$ is a semiflow on $H^1$. By envoking \cite[Prop. 2.3]{Prizzi-FM} (comp. \cite[Theorem 3.2.1]{Dlotko}, \cite[Prop. 4.3]{Cwiszewski-RL}) we get
the following continuity result.
\begin{proposition}\label{12122016-1244} Given sequences $(\bar u_n)$ in $H^1$ and $\lambda_n\to\lambda$ in $\R$,\\
\indent {\em (i)} if\,  $\bar u_n\to \bar u$ in $H^1$, then $\Phi^{\lambda_n}_t(\bar u_n)\to \Phi^\lambda_t(\bar u)$ uniformly with respect to $t$ in compact subsets of $\R$;
as a consequence the family $\{\Phi^\lambda\}_{\lambda\in \R}$ is continuous;\\
\indent {\em (ii)} if\,  $T>0$, $R>0$, $\|\Phi^{\lambda_n}_t(\bar u_n)\|_{H^1}\leq R$ for all  $t\in [0,T]$ and $\bar u_n\to \bar u$ in $L^2$, then $\Phi^{\lambda_n}_t(\bar u_n)\to \Phi^\lambda_t(\bar u)$ uniformly with respect to $t$ in compact subsets of $(0,T]$. \hfill $\square$
\end{proposition}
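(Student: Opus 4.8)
The plan is to realize each semiflow $\Phi^\lambda$ through the variation of constants formula and to reduce both claims to a weakly singular Gronwall inequality in $H^1$. Writing $T(t):=e^{-\A t}$ for the analytic semigroup generated by $-\A$ (sectorial and self-adjoint by Remark~\ref{spectra}(3)), the decisive simplification is that $\lambda\I$ is scalar and commutes with $\A$, so that $e^{-(\A-\lambda\I)t}=e^{\lambda t}T(t)$ and the solution $u(t;\bar u,\lambda)$ of \eqref{w1} is the mild solution
\[
u(t;\bar u,\lambda)=e^{\lambda t}T(t)\bar u+\int_0^t e^{\lambda(t-s)}T(t-s)\F(u(s))\,\d s .
\]
Since $D(\A)=H^2$ and $\A$ is sectorial and self-adjoint, the fractional power space $X^{1/2}$ coincides with $H^1$ (with equivalent norms), which yields the smoothing estimate $\|T(t)\|_{\mathcal L(L^2,H^1)}\le\c\,t^{-1/2}$ on $(0,T]$. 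Together with the global bound $\|\F(u)\|_{L^2}\le\|m\|_{L^2}$ from \eqref{w2} and the Lipschitz estimate \eqref{w3}, this is everything the argument uses.

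For (i), put $u_n:=u(\cdot;\bar u_n,\lambda_n)$, $u:=u(\cdot;\bar u,\lambda)$ and $\phi_n(t):=\|u_n(t)-u(t)\|_{H^1}$. Subtracting the two Duhamel formulas, the initial term splits as $e^{\lambda_n t}T(t)(\bar u_n-\bar u)+(e^{\lambda_n t}-e^{\lambda t})T(t)\bar u$, whose $H^1$-norm is bounded by $\c\,\|\bar u_n-\bar u\|_{H^1}+\c\,|\lambda_n-\lambda|\,\|\bar u\|_{H^1}\to0$ uniformly on $[0,T]$, because $T(t)$ is bounded on $H^1$ uniformly on $[0,T]$ and $|e^{\lambda_n\tau}-e^{\lambda\tau}|\le\c\,|\lambda_n-\lambda|$ there. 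Splitting the integrand likewise, the parameter part is controlled by $\int_0^t\c\,|\lambda_n-\lambda|(t-s)^{-1/2}\|m\|_{L^2}\,\d s\to0$ (using $\|T(t-s)\F(u(s))\|_{H^1}\le\c\,(t-s)^{-1/2}\|m\|_{L^2}$), while the remaining part is estimated through \eqref{w3} and the smoothing by $\c\,L\int_0^t(t-s)^{-1/2}\phi_n(s)\,\d s$. Thus $\phi_n(t)\le\e_n+\c\,L\int_0^t(t-s)^{-1/2}\phi_n(s)\,\d s$ with $\e_n\to0$, and the singular Gronwall inequality (\cite[Lemma 7.1.1]{Henry}) gives $\sup_{[0,T]}\phi_n\le\c\,\e_n\to0$. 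Joint continuity of $(t,x,\lambda)\mapsto\Phi^\lambda_t(x)$ then follows in the usual way from this uniform convergence.

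For (ii) the only structural change is at the initial-data term. Since $\|\bar u_n\|_{H^1}\le R$ and $\bar u_n\to\bar u$ in $L^2$, the sequence $(\bar u_n)$ converges weakly in $H^1$, so $\bar u\in H^1$ and $u=\Phi^\lambda(\bar u)$ is a genuine $H^1$-solution; both trajectories then lie in $H^1$ and \eqref{w3} still applies. Now only $L^2$-closeness of the data is available, so the initial term is controlled solely by smoothing, $\|e^{\lambda_n t}T(t)\bar u_n-e^{\lambda t}T(t)\bar u\|_{H^1}\le\c\,t^{-1/2}\eta_n$ with $\eta_n:=\|\bar u_n-\bar u\|_{L^2}+|\lambda_n-\lambda|\to0$. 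The resulting inequality $\phi_n(t)\le\c\,\eta_n\,t^{-1/2}+\c\,L\int_0^t(t-s)^{-1/2}\phi_n(s)\,\d s$ has a forcing term that blows up as $t\to0^+$, but the point is that the singularity $t^{-1/2}$ is integrable and matches the kernel, so the convolution $\int_0^t(t-s)^{-1/2}s^{-1/2}\,\d s=B(1/2,1/2)$ stays bounded; the a priori bound $R$ guarantees $\phi_n$ is finite, making the Gronwall iteration legitimate. Hence $\phi_n(t)\le\c\,\eta_n\,(t^{-1/2}+1)$, which tends to $0$ uniformly on every $[\delta,T]$ with $\delta>0$, i.e. on compact subsets of $(0,T]$.

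I expect the main obstacle to be precisely this initial-time analysis in (ii). The tempting route of first proving $L^2$-convergence fails: the Lipschitz bound \eqref{w3} forces the $L^q$-norm to appear, and interpolating $\|u_n(s)-u(s)\|_{L^q}\le\c\,\|u_n(s)-u(s)\|_{L^2}^{1-\theta}\|u_n(s)-u(s)\|_{H^1}^{\theta}$ against the uniform $H^1$-bound produces a \emph{concave} power of $\|u_n(s)-u(s)\|_{L^2}$ in the $L^2$-estimate, for which the comparison ODE $y'=\c\,y^{1-\theta}$ does not return to $0$ as the data converge. One is therefore forced to work directly in $H^1$ and to exploit that the $t^{-1/2}$ smoothing singularity is exactly compensated by the weak singularity of the Gronwall kernel; alternatively one may simply invoke \cite[Prop. 2.3]{Prizzi-FM}, where this analysis is carried out.
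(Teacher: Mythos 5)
Your argument is correct, and it is essentially the paper's own route: the authors give no proof of this proposition but simply invoke \cite[Prop. 2.3]{Prizzi-FM} (comparing with \cite[Theorem 3.2.1]{Dlotko} and \cite[Prop. 4.3]{Cwiszewski-RL}), and the Duhamel-formula-plus-singular-Gronwall argument you supply, with the $t^{-1/2}$ smoothing absorbing the loss from $L^2$-only convergence of the data in part (ii), is exactly the standard argument underlying those citations. The one ingredient worth an explicit word is the identification of the fractional power space $D\big((\A+\omega\I)^{1/2}\big)$ with $H^1$, which gives the smoothing estimate $\|T(t)\|_{{\mathcal L}(L^2,H^1)}\leq \c\, t^{-1/2}$; this is precisely the fact the authors themselves use later (via \cite[Cor. 1.3.5]{Dlotko}) in the proof of Lemma \ref{18022016-0914}, so it is legitimately available here.
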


Recall the standing assumptions and, as in Theorem \ref{24062015-1056} (i), suppose that
\be\label{eigen}\lambda_0\;\;\text{is an isolated eigenvalue of}\;\;\A\;\;\text{of finite multiplicity and let }\;\;0<\delta<\dist(\lambda_0,\sigma(\A)\setminus\{\lambda_0\}).\ee

Let $X_0:=\Ker ({\A} -\lma_0 {\I})$, $X_\pm$ be the closed subspaces of $L^2$ corresponding to $\sigma(\A)\cap (-\infty,\lambda_0)$, $\sigma(\A)\cap (\lambda_0,+\infty)$, respectively; let $X:=X_-\oplus X_+$ ($\oplus$ stands for the orthogonal sum). It is clear that $X_0$, $X_\pm$ are $\A$-invariant, $L^2=X_0\oplus X$, $\dim X_0,\dim X_-<\infty$ and $X_0, X_-\subset H^2$ since these spaces are spanned by a finite number of  eigenfunctions. Let $\Q_\pm:L^2\to L^2$ be the orthogonal projections onto $X_\pm$, $\Q:=\Q_-+\Q_+$ and $\P:=\I-\Q$. Observe that $\P,\Q_-\in {\mathcal L}(L^2,H^2)$,
$\Q_+(H^2)\subset H^2\cap X_+$ and $\Q_+|_{H^1}\in {\mathcal L}(H^1,H^1)$, i.e.
\be\label{szac q plus}\|{\Q}|_{H^1}\|_{{\mathcal L}(H^1,H^1)}<\infty.\ee
If $|\lambda-\lambda_0|\leq\delta$, then $\lambda\not\in \sigma(\A|_X)$. Hence $(\A-\lambda\I)|_X$ is inveritble and the map
\be\label{odwotne}[\lambda_0-\delta,\lambda_0+\delta]\times X\ni (\lambda,w)\mapsto [(\A-\lambda\I)|_X]^{-1}w\in X\cap H^2\ee
is continuous and $\|[(\A-\lambda\I)|_X]^{-1}w\|_{H^2}\leq \c \|w\|_{L^2}.$

\begin{lemma}\label{odd} The map
$$[\lambda_0-\delta,\lambda_0+\delta]\times L^2\ni (\lambda,u)\mapsto \G(\lambda,u):=\F(\P u+[(\A-\lambda\I)|_X]^{-1}\Q u)\in L^2$$
is completely continuous.
\end{lemma}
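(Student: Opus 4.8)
The plan is to show that $\G(\lambda,u)=\F\bigl(\P u+[(\A-\lambda\I)|_X]^{-1}\Q u\bigr)$ is the composition of a continuous linear-in-$u$ map into $H^1$ with the superposition operator $\F$, and then to exploit the gain in regularity to convert weak convergence into strong $L^2$ convergence of the arguments, so that continuity of $\F$ as a map $L^2\to L^2$ (see Remark \ref{spectra} (5)) finishes the job. First I would fix notation: set $\Psi(\lambda,u):=\P u+[(\A-\lambda\I)|_X]^{-1}\Q u$, so that $\G=\F\circ\Psi$. By the observations preceding the lemma, $\P=\I-\Q$ maps $L^2$ into $X_0\oplus X_-\subset H^2$ (a finite-dimensional space of eigenfunctions), while $[(\A-\lambda\I)|_X]^{-1}$ maps $L^2$ into $X\cap H^2$ with the uniform bound $\|[(\A-\lambda\I)|_X]^{-1}w\|_{H^2}\le \c\,\|w\|_{L^2}$ from \eqref{odwotne}. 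Hence $\Psi$ takes values in $H^2$ and there is a constant $\c$ with $\|\Psi(\lambda,u)\|_{H^2}\le \c\,\|u\|_{L^2}$, uniformly for $\lambda\in[\lambda_0-\delta,\lambda_0+\delta]$.

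The substance is to verify that $\G$ maps bounded sequences to precompact sets and is sequentially continuous. So I would take sequences $\lambda_n\to\lambda$ in $[\lambda_0-\delta,\lambda_0+\delta]$ and $(u_n)$ bounded in $L^2$, say $\|u_n\|_{L^2}\le R$, and prove that $(\G(\lambda_n,u_n))$ has a convergent subsequence; for sequential continuity one takes $u_n\to u$ in $L^2$ and argues that every subsequence has a further subsequence converging to $\G(\lambda,u)$. By the bound above, $w_n:=\Psi(\lambda_n,u_n)$ is bounded in $H^2$, hence (passing to a subsequence) $w_n\rightharpoonup w$ weakly in $H^2$ and $u_n\rightharpoonup u$ weakly in $L^2$. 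The continuity of the map \eqref{odwotne} in $(\lambda,w)$, together with $\lambda_n\to\lambda$, lets me identify the weak limit as $w=\Psi(\lambda,u)$. The key point is to upgrade $w_n\rightharpoonup w$ in $H^2$ to $w_n\to w$ strongly in $L^2$. Here I invoke Remark \ref{comp} (2): a bounded subset of $H^1$ (a fortiori of $H^2$) is relatively $L^2$-compact provided its tails are uniformly small, i.e. $\int_{\{|x|\ge \rho\}}|w_n|^2\,dx\to 0$ uniformly in $n$ as $\rho\to\infty$.

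The main obstacle is exactly this uniform tail estimate, since $H^2$-boundedness alone does not give $L^2$-compactness on all of $\R^N$ (the embedding $H^2(\R^N)\hookrightarrow L^2(\R^N)$ is not compact). To establish it I would decompose $w_n=\P u_n+v_n$ with $v_n:=[(\A-\lambda_n\I)|_X]^{-1}\Q u_n$. The component $\P u_n$ lies in the finite-dimensional space $X_0\oplus X_-$ of eigenfunctions, which are fixed $H^2$ functions, so their $L^2$-tails are uniformly small and, being finite-dimensional and bounded, this part is automatically precompact in $L^2$. For $v_n$ I would use that $v_n$ solves $(\A-\lambda_n\I)v_n=\Q u_n$ with $\|v_n\|_{H^2}\le \c R$; the uniform decay of $v_n$ at infinity follows from the structure of $\A=\A_0+\V_\infty+\V_0$ together with a Persson-type argument reflecting that $\lambda_n$ stays a fixed distance below $\alpha_\infty\le s_\infty^*=\inf\sigma_e$, so the resolvent of the part of $\A$ away from the origin has exponentially (or at least uniformly) decaying effect — concretely, a cut-off/commutator estimate against $\chi_{\{|x|\ge\rho\}}$ gives $\int_{\{|x|\ge\rho\}}|v_n|^2\,dx\le \c\bigl(\rho^{-1}+\int_{\{|x|\ge\rho/2\}}|V_0|^p\bigr)^{2/p}R^2\to 0$ uniformly in $n$, in the spirit of the tail bound \eqref{pocz2} used in the proof of Lemma \ref{zw}. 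Granting this, $w_n\to w$ in $L^2$, whence $\G(\lambda_n,u_n)=\F(w_n)\to\F(w)=\G(\lambda,u)$ by the $L^2\to L^2$ continuity of $\F$; combined with the uniform bound $\|\F(w_n)\|_{L^2}\le\|m\|_{L^2}$ from \eqref{w2} this yields both precompactness of the image and sequential continuity, i.e. complete continuity of $\G$.
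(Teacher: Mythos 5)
Your overall strategy (compose a regularizing linear map with $\F$, use compactness to upgrade to strong $L^2$ convergence of the arguments, then invoke continuity of $\F$) contains a genuine gap at exactly the step you flag as the ``main obstacle'': the uniform tail estimate for $v_n:=[(\A-\lambda_n\I)|_X]^{-1}\Q u_n$ is not available under the hypotheses of this lemma, and is in fact false in general. Lemma \ref{odd} is stated in the setting \eqref{eigen}, i.e.\ for Theorem \ref{24062015-1056}~(i), where $\lambda_0$ is merely an isolated eigenvalue of finite multiplicity; the assumption $\lambda_0<\alpha_\infty$ is only introduced afterwards, for part (ii). So your appeal to ``$\lambda_n$ stays a fixed distance below $\alpha_\infty\le s_\infty^*$'' imports a hypothesis the lemma does not have. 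More fundamentally, $X_+$ is the spectral subspace for $\sigma(\A)\cap(\lambda_0,+\infty)$, which in general contains the essential spectrum, and the bounded operator $[(\A-\lambda\I)|_X]^{-1}$ on this infinite-dimensional subspace does not localize: if $u_n$ is a sequence of unit bumps translating to infinity, then $\Q u_n$ differs from $u_n$ by a finite-rank correction and $v_n$ essentially translates to infinity as well, so $\sup_n\int_{\{|x|\ge\rho\}}|v_n|^2\,dx$ does not tend to $0$. Consequently $w_n=\Psi(\lambda_n,u_n)$ need not be precompact in $L^2(\R^N)$, and your route to strong convergence of the arguments breaks down. (The cut-off estimate \eqref{pocz2} you cite from Lemma \ref{zw} works there because $V_0\in L^p$ multiplies the function and kills the tails; nothing multiplies $v_n$ here.)

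The paper's proof avoids this entirely by placing the tail estimate on the \emph{output} rather than the argument: since $|f(x,s)|\le m(x)$ with $m\in L^2$ by \eqref{17022016-1449}, one has $\int_{\{|x|\ge R\}}|z_n|^2\,dx\le\int_{\{|x|\ge R\}}m^2\,dx<\eps^2$ uniformly in $n$, where $z_n=\G(\lambda_n,u_n)$. It then suffices to know that the arguments converge (along a subsequence) in $L^2(B)$ on the bounded ball $B=\{|x|<R\}$, which follows from the $H^2$-bound on $\tilde w_n$ and the Rellich--Kondrachov theorem on $B$, plus finite-dimensionality of $X_0$ for the $\P u_n$ part. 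Your proof becomes correct if you replace your global tail argument for $v_n$ by this observation; everything else you wrote (the uniform bound $\|\Psi(\lambda,u)\|_{H^2}\le\c\|u\|_{L^2}$, the identification of limits via the continuity of \eqref{odwotne}, the use of \eqref{w2}) is sound, modulo the minor point that $\P=\I-\Q$ projects onto $X_0$ alone, not onto $X_0\oplus X_-$.
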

\begin{proof} The continuity of $\G$ is evident. Let sequence $(u_n)$ in $L^2$ and $(\lambda_n)$ in $[\lambda_0-\delta,\lambda_0+\delta]$ be bounded. Let $v_n=\P u_n$, $w_n:=\Q u_n$, $\tilde w_n:=[(\A-\lambda_n\I)|_X]^{-1}w_n$ and $z_n:=\G(\lma_n, u_n)$, $n\geq 1$.  Without loss of generality we may assume that $v_n\to v_0\in X_0$.
Take an arbitrary $\eps>0$. In view of \eqref{17022016-1449} there is $R>0$ such that for all $n\geq 1$
\be\label{odd1}\int_{\{|x|\geq R\}}z_n^2\,dx\leq \int_{\{|x|\geq R\}}m^2\,dx<\eps^2.\ee
Let $B=\{x\in\R^N\mid |x|<R\}$, $v_n':=v_n|_B$, $\tilde w_n':=\tilde w_n|B$, $n\geq 1$. Then $v_n'\to v_0':=v_0|_B$; the sequence $(\tilde w_n')$ is bounded in $H^2(B)$ and, thus, we may assume that $\tilde w_n'\to \tilde w_0'\in L^2(B)$ as $n\to\infty$. For $n\geq 0$ let
$$z_n'=\begin{cases}f(x,v_n'(x)+\tilde w_n'(x))\;\;&\text{on}\;\;B,\\
0\;\;&\text{on}\;\;\R^N\setminus B.\end{cases}$$
Then $z_n'\to z_0'$ in $L^2$ and, in view of \eqref{odd1},  $\|z_n-z_n'\|_{L^2}<\eps$. This implies that $\{z_n\}$ is precompact.
\end{proof}

Now, in the context of Theorem \ref{24062015-1056} (ii) we suppose that
\begin{gather}\label{alfa-infty}\lambda_0\in\sigma(\A) \;\; \text{ and }\;\; \lambda_0<\alpha_\infty.\end{gather}
In view of Remark \ref{spectra} (4), $\lambda_0$ is an isolated eigenvalue of finite multiplicity. Take $\delta>0$ such that
\begin{align}\label{spectra 2}&0<\delta<\min\{\alpha_\infty-\lambda_0,\dist(\lambda_0,\sigma({\A})\setminus\{\lambda_0\}\}.\end{align}

\begin{lemma}\label{12102017-1634} {\em (comp. \cite[Proposition 2.2]{Prizzi}, \cite{Cwiszewski-RL})} Let $R>0$ and let $\delta>0$ be given as in \eqref{spectra 2}. There is $\alpha>0$ and a sequence $(\alpha_n)$ with $\alpha_n\searrow 0$ such that if $u:[t_0,t_1] \to H^1$ is a solution of the semiflow $\Phi^\lambda$ corresponding to \eqref{w1} for some $\lma\in [\lma_0-\delta, \lma_0 +\delta]$ such that $\|\Q u(t)\|_{H^1}\leq R$ for all $t\in [t_0,t_1]$, then there is $n_0\geq 1$ such that
\be\label{admis 1}
\forall\,n\geq n_0\quad\quad
\int_{\R^N \setminus B(0,n)} |\Q u(t_1)|^2\, dx \leq
e^{-2\alpha(t_1-t_0)} \|u(t_0) \|_{L^2}^{2} + \alpha_n.
\ee
\end{lemma}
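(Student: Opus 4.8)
The plan is to obtain \eqref{admis 1} from a weighted (cut-off) energy inequality for the component $w:=\Q u$, the point being that the quadratic form of $\A-\lambda\I$ is coercive far from the origin precisely because $\lambda_0<\alpha_\infty$. Since $X=X_-\oplus X_+$ is $\A$-invariant, $\Q$ commutes with $\A$, so applying $\Q$ to \eqref{w1} shows that $w(t)\in X\cap H^2$ for $t>t_0$ solves $\dot w=-\A w+\lambda w+g$ with $g:=\Q\F(u)$; as a genuine $H^2$ function $w$ satisfies pointwise $\dot w-\Delta w+(V_\infty+V_0)w-\lambda w=g$, and by \eqref{w2} together with $\|\Q\|_{\mathcal L(L^2,L^2)}\le 1$ we have $\|g(t)\|_{L^2}\le\|m\|_{L^2}$. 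I would fix a smooth cut-off $\theta_n$ with $\theta_n\equiv0$ on $B(0,n/2)$, $\theta_n\equiv1$ outside $B(0,n)$ and $|\nabla\theta_n|\le \c/n$, and set $y(t):=\int_{\R^N}\theta_n^2w^2\,dx$. The target is the differential inequality $y'+2\alpha y\le\mu_n$ with $\mu_n\to0$, after which integrating the factor $e^{2\alpha t}y$ over $[t_0,t_1]$ gives $y(t_1)\le e^{-2\alpha(t_1-t_0)}y(t_0)+\mu_n/(2\alpha)$; since $y(t_0)\le\|\Q u(t_0)\|_{L^2}^2\le\|u(t_0)\|_{L^2}^2$ and $y(t_1)\ge\int_{\R^N\setminus B(0,n)}|\Q u(t_1)|^2\,dx$, setting $\alpha_n:=\mu_n/(2\alpha)$ (made monotone by passing to $\sup_{k\ge n}$) yields \eqref{admis 1}.

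Testing the equation for $w$ with $\theta_n^2w$ and integrating by parts produces the identity
\[
\tfrac12\,y'+\int\theta_n^2|\nabla w|^2\,dx+\int\theta_n^2V_\infty w^2\,dx-\lambda y
=\int\theta_n^2wg\,dx-\int2\theta_n w\,\nabla\theta_n\cdot\nabla w\,dx-\int\theta_n^2V_0w^2\,dx.
\]
The crucial point is the coercivity of the left-hand side far from the origin. On $\supp\theta_n\subset\{|x|\ge n/2\}$ one has $V_\infty\ge\beta_n:=\essinf_{|x|\ge n/2}V_\infty$, and $\beta_n\to\alpha_\infty$ by \eqref{def alfa}; since $\lambda\le\lambda_0+\delta<\alpha_\infty$ by \eqref{spectra 2}, the gap $\beta_n-\lambda$ stays bounded below by a fixed positive number as $n\to\infty$, so the bracketed terms on the left dominate $\int\theta_n^2|\nabla w|^2\,dx+2\alpha y$ for a fixed $\alpha>0$ and all $n\ge n_0$. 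This is exactly where the hypothesis $\lambda_0<\alpha_\infty$ (placing $\lambda_0$ below the bottom $\alpha_\infty$ of $\sigma_e(\A)$, cf. \eqref{spectra 1}) is indispensable: without it the far-field quadratic form need not be positive and no decay would be available.

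It then remains to show that each term on the right is $o(1)$ as $n\to\infty$, uniformly over the admissible data, so that they collect into $\mu_n$. The gradient cross term is bounded, by Young's inequality, by $\int\theta_n^2|\nabla w|^2\,dx+\int|\nabla\theta_n|^2w^2\,dx$; the first summand is absorbed on the left and the second is $\le(\c/n)^2\|w\|_{L^2}^2\le(\c/n)^2R^2$, using $\|w\|_{L^2}\le\|\Q u\|_{H^1}\le R$. For the forcing term, Young gives $\int\theta_n^2wg\,dx\le\alpha y+\tfrac1{4\alpha}\int_{\{|x|\ge n/2\}}g^2\,dx$; writing $g=\F(u)-\P\F(u)$ with $\P$ of finite rank, the tail $\int_{\{|x|\ge n/2\}}\F(u)^2\le\int_{\{|x|\ge n/2\}}m^2$ and the tail of the finite-dimensional component $\P\F(u)$ (bounded in $L^2$ by $\|m\|_{L^2}$) both tend to $0$. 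For the unbounded potential term, Hölder's inequality together with the embedding $H^1\hookrightarrow L^q$ from Remark \ref{spectra}~(2) gives $\int\theta_n^2|V_0|w^2\,dx\le\|V_0\|_{L^p(\{|x|\ge n/2\})}\,\|w\|_{L^{2p/(p-1)}}^2\le\c R^2\|V_0\|_{L^p(\{|x|\ge n/2\})}$, which vanishes since $V_0\in L^p$ and $2p/(p-1)\in[2,q]$ by \eqref{pocz1}. Combining, and shrinking $\alpha$ to absorb the $\alpha y$ coming from the forcing term, yields $y'+2\alpha y\le\mu_n$ with $\mu_n\to0$.

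I expect the main obstacle to lie less in any single estimate than in making the energy identity legitimate and the error terms genuinely uniform. One must invoke the parabolic smoothing of Remark \ref{spectra}~(6), so that $w(t)\in H^2$ for $t>t_0$ and the integrations by parts are valid on $(t_0,t_1]$, while keeping the final bound in terms of $\|u(t_0)\|_{L^2}$, which is meaningful already for $H^1$ data (the inequality for $y$ is then integrated using only continuity of $y$ up to $t_0$). One must also observe that, although $\Q$ is a nonlocal spectral projection, $w=\Q u$ is nevertheless a true $H^2$ function on which $\A$ acts as the differential operator $-\Delta+V$, so the localized computation is justified and the only genuinely nonlocal effect, the finite-rank correction $\P\F(u)$, contributes negligible tails. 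The remaining care concerns the dimension-dependent Sobolev exponents in the $V_0$ estimate, which are covered precisely by the choice of $q$ in \eqref{pocz1}.
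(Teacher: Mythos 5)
Your proposal is correct and follows essentially the same route as the paper: a cut-off energy estimate for $w=\Q u$ exploiting the coercivity of $V_\infty-\lambda$ outside a large ball (which is where $\lambda_0<\alpha_\infty$ enters), H\"older tails for $V_0$, the $L^2$-bound $m$ for the nonlinearity, compactness of the finite-rank part $\P\F(u)$ for its tails, and a Gronwall integration with the factor $e^{2\alpha(t-t_0)}$. The only differences (your $\theta_n^2$ versus the paper's $\phi_n$, and using Young's inequality to absorb the gradient cross term and part of the forcing term rather than discarding the negative gradient term and bounding directly) are cosmetic.
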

\noindent {\em Proof.} Since $u$ is a solution of $\Phi^\lambda$, we have $u(t+t_0)=\Phi^\lambda_t(u(t_0))$ for $t\in [0,t_1-t_0]$, i.e., in the case of \eqref{w1},
$$
\dot u (t)= -{\mathbf{A}}u(t) + \lma u(t) + {\F}(u(t))\;\; \text{for}\;\; t\in (t_0,t_1].
$$	
For  $w:=\Q u$ and $t\in (t_0, t_1]$ we have
$$\dot w(t)=-{\A}w(t)+\lambda w(t)+\Q{\F}(u(t)).$$
\indent Let $\phi:[0,+\infty)\to [0,1]$ be a smooth
function such that $\phi(s)=0$ if $s\in [0,1/2]$  and $\phi(s)=1$ if $s\geq 1$.
Putting
$$\phi_n(x):=\phi(|x|^2/n^2),\;\; x\in\R^N,$$
we get, for $t\in (t_0, t_1]$ and $n\geq 1$,
\begin{align*}
\frac{1}{2}\frac{d}{d t}\la w(t), \phi_n w(t)\ra_{L^2}&=
	\la\phi_n w(t),\dot w(t)\ra_{L^2}=\la\phi_nw(t),-(\mathbf{A}_0+\mathbf{V}_0+\mathbf{V}_\infty-\lma {\I})
w(t)+\Q\mathbf{F}(u(t))\ra_{L^2}=\\
&=I_1(t) + I_2(t) + I_3(t),
\end{align*}
where
\begin{align*}
I_1(t)&=\;\la\phi_n w(t), - {\A}_0w(t)\ra_{L^2}=-\la\nabla(\phi_nw(t)),\nabla w(t)\ra_{L^2}=
-\int_{\R^N}\phi_n(x)|\nabla w(t)|^2\,dx+\\
&-\frac{2}{n^2}\int_{\{\frac{\sqrt{2}}{2}n\leq |x|\leq n\}} \phi'(|x|^2/n^2)\la w(t)x,\nabla w(t)\ra_{\R^N}\,dx\leq  \frac{2L_\phi}{n}\|w(t)\|_{L^2}\|\|w(t)\|_{H^1}\leq\frac{2L_\phi R^2}{n},
\end{align*}
with
\be\label{Lfi}L_\phi:= \sup_{s\in[0,+\infty)}|\phi'(s)|;\ee
note that $L_\phi < \infty$.\\
\indent In order to estimate the second term  $I_2(t)$, take $0<\eta\leq \frac{1}{2}(\alpha_\infty-\lambda_0-\delta)$. By definition of $\alpha_\infty$ (see \eqref{def alfa}), there is a positive integer $n_0$ such that $V_\infty(x)>\alpha_\infty-\eta$ for a.a. $|x|\geq \sqrt{2}n_0/2$. For $n\geq n_0$ we have
\begin{align*}
	I_2(t)&=\la\phi_nw(t),-({\V}_0+{\V}_\infty-\lambda {\I})w(t)\ra_{L^2}=
	-\la\phi_nw(t),({\V}_\infty-\lambda {\I})w(t)\ra_{L^2}-\la\phi_nw(t),V_0w(t)\ra_{L^2}=\\
&=-\int_{\{\frac{\sqrt{2}}{2}n\leq |x|\leq n\}}\phi_n(x)(V_\infty(x)-\lambda)|w(t)|^2\,dx-\int_{\R^N}\phi_n(x)V_0(x)|w(t)|^2\,dx\leq\\
&\leq -\alpha\la\phi_nw(t),w(t)\ra_{L^2}+\c\|w(t)\|_{H^1}^2
\left(\int_{\{|x|\geq\frac{\sqrt{2}}{2}n\}}|V_0(x)|^p\,dx\right)^{1/p},
\end{align*}
where $\alpha:=\alpha_\infty-\lambda_0-\delta-\eta>0$; the last estimate follows in view of the H\"older inequality since $\|w(t)\|_{L^{2p/p-1}}\leq\c\|u(t)\|_{H^1}$. Finally for all $n\geq 1$
\be\label{oszac}
\begin{split}
I_3(t)&=\la\phi_nw(t),{\Q} {\F}(u(t))\ra_{L^2}\leq
\|w(t)\|_{L^2} \left(\| \phi_n {\F}(u(t))\|_{L^2} + \|\phi_n {\P} {\F}(u(t))\|_{L^2}\right)\leq \\
& \leq R\left(\left(\int_{\{|x|>\frac{\sqrt{2}}{2}n\}} |m(x)|^2 dx \right)^{1/2}
+\ \kappa_n \right),
\end{split}
\ee
where $\kappa_n:=
\sup \left\{  \left( \int_{\{|x|>\frac{\sqrt{2}}{2}n\}} |z(x)|^2 dx \right)^{1/2}
\left|\right. z\in {\P}\left(B\left( 0 , \|m\|_{L^2} \right) \right)
\right\}$ for $n\geq 1$.
Since ${\P}\left(B\left(0,\|m\|_{L^2}\right)\right)$ is relatively compact (as a bounded subset of the finite dimensional space) with respect to the $L^2$ topology, in view of the Kolmogorov-Riesz compactness criterion (see e.g. \cite[Theorem 5]{Hanche}), we see that $\kappa_n \to 0^+$ as $n\to \infty$.\\
\indent Combining these estimates we get that for any $n\geq n_0$
$$
\frac{d}{d t}\la w(t), \phi_n w(t)\ra_{L^2} \leq -2\alpha\la w(t), \phi_n
w(t)\ra_{L^2} + 2\tilde \alpha_n,
$$
where
$$
\tilde \alpha_n:= \frac{2 R^2 L_\phi}{n} +  \c R^2 \bigg(\int_{\big\{|x|\geq \frac{\sqrt{2}}{2}n\big\}}\!\! |V_0(x)|^p d x\bigg)^{1/p} \!\!\! + R \bigg(\!\!\int_{\big\{|x|\geq \frac{\sqrt{2}}{2}n\big\}}|m(x)|^2 d x\bigg)^{1/2}+R\kappa_n.
$$
Multiplying by $e^{2\alpha (t-t_0)}$ and integrating
over $[t_0,t_1]$ one obtains
$$
e^{2\alpha (t_1-t_0 )}\la w(t_1), \phi_n w(t_1)\ra_{L^2}- \la w(t_0), \phi_n w(t_0)\ra_{L^2}
\leq \frac{e^{2\alpha(t_1-t_0)}-1}{\alpha}\tilde  \alpha_n,
$$
This clearly implies
\begin{align*}
\int_{\R^N\setminus B(0,n)} |w(t_1)|^2 d x \leq \la w(t_1), \phi_n w(t_1)\ra_{L^2} \leq e^{-2\alpha (t_1-t_0)}\|w(t_0)\|^2_{L^2} +
\alpha^{-1} \tilde \alpha_n,
\end{align*}
which finally yields the assertion with $\alpha_n:=\frac{\tilde\alpha_n}{\alpha}$.\hfill $\square$

\begin{proposition}\label{26092017-1533}
Let  $R>0$, $\delta$ be as in Lemma  \ref{12102017-1634} and $M_R$ be the set of $\bar u \in H^1(\R^N)$ such that there exists a solution $u:(-\infty, 0]\to H^1(\R^N)$ of $\Phi^\lambda$ for some $\lma\in [\lma_0-\delta,\lma_0+\delta]$ with $u(0)=\bar u$ and $\|\mathbf{Q}u(t)\|_{H^1} \leq R$ for all $t\leq 0$. Then $\mathbf{Q}M_R$ is relatively compact in $L^2(\R^N)$.
\end{proposition}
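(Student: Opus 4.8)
The plan is to reduce the assertion to a uniform tightness estimate at infinity and then combine the exponential decay of Lemma \ref{12102017-1634} with the compactness criterion of Remark \ref{comp}~(2). First I would note that $\Q M_R$ is bounded in $H^1$: for any $\bar u\in M_R$ the associated solution satisfies $\|\Q\bar u\|_{H^1}=\|\Q u(0)\|_{H^1}\leq R$, since $t=0$ belongs to the domain $(-\infty,0]$ of $u$. Thus, by Remark \ref{comp}~(2), it suffices to prove that the family $\Q M_R$ is uniformly tight in $L^2$, i.e. that for every $\eps>0$ there is an integer $n$ with
$$\int_{\{|x|\geq n\}}|\Q\bar u|^2\,dx<\eps\quad\text{for all}\;\;\bar u\in M_R.$$

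Next I would fix $\bar u\in M_R$ together with a solution $u\colon(-\infty,0]\to H^1$ of some $\Phi^\lambda$, $\lambda\in[\lambda_0-\delta,\lambda_0+\delta]$, with $u(0)=\bar u$ and $\|\Q u(t)\|_{H^1}\leq R$ for all $t\leq 0$. For an arbitrary $T>0$ the restriction of $u$ to $[-T,0]$ is a solution of $\Phi^\lambda$ meeting the hypotheses of Lemma \ref{12102017-1634}, so that lemma, applied with $t_0=-T$ and $t_1=0$, gives for all $n\geq n_0$
$$\int_{\R^N\setminus B(0,n)}|\Q u(0)|^2\,dx\leq e^{-2\alpha T}\,\|\Q u(-T)\|_{L^2}^2+\alpha_n\leq e^{-2\alpha T}R^2+\alpha_n.$$
The crucial point here is that the decaying term carries the norm of the $\Q$-component, $\|\Q u(-T)\|_{L^2}\leq\|\Q u(-T)\|_{H^1}\leq R$, which is bounded uniformly in $T$; the full norm $\|u(-T)\|_{L^2}$ need not remain bounded as $T\to\infty$, so it is essential to use the $\Q$-component estimate produced in the proof of Lemma \ref{12102017-1634} rather than the full $L^2$ norm. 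Moreover $\alpha>0$, the null sequence $(\alpha_n)$ and the threshold $n_0$ depend only on $R$, $\delta$ and the potential, hence are the same for every $\bar u\in M_R$ and every admissible $\lambda$.

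Since the left-hand side is independent of $T$, letting $T\to\infty$ (legitimate because $u$ is defined on all of $(-\infty,0]$) yields
$$\int_{\{|x|\geq n\}}|\Q\bar u|^2\,dx\leq\alpha_n\quad\text{for all}\;\;n\geq n_0,$$
uniformly over $\bar u\in M_R$. Given $\eps>0$ one then picks $n\geq n_0$ with $\alpha_n<\eps$, which is possible because $\alpha_n\searrow 0$; this is precisely the uniform tightness required, and together with the $H^1$-boundedness of $\Q M_R$ and Remark \ref{comp}~(2) it shows that $\Q M_R$ is relatively compact in $L^2$. I expect the only genuine subtleties to be the uniformity of $\alpha$, $(\alpha_n)$ and $n_0$ across the whole family (independence of the particular trajectory and of $\lambda\in[\lambda_0-\delta,\lambda_0+\delta]$), and the insistence on bounding the decaying term by $\|\Q u(-T)\|_{L^2}$ rather than by $\|u(-T)\|_{L^2}$, since the $X_0$-component of $u(t)$ may grow as $t\to-\infty$ at a rate not controlled by $\alpha$.
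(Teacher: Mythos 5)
Your proof is correct and follows essentially the same route as the paper: apply Lemma \ref{12102017-1634} on $[-T,0]$, use the uniformity of $\alpha$, $(\alpha_n)$ and $n_0$ over $M_R$ and over $\lambda\in[\lambda_0-\delta,\lambda_0+\delta]$, and conclude via the tightness criterion of Remark \ref{comp}~(2). Your observation that the decaying term must carry $\|\Q u(-T)\|_{L^2}$ rather than the full $\|u(-T)\|_{L^2}$ (which is what the lemma's proof actually yields, and what the paper's own proof of the proposition uses) is exactly the right point of care, since the $X_0$-component of the backward trajectory is not assumed bounded.
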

\noindent {\em Proof}. We will use Remark \ref{comp} (2). Take $\eps>0$ and $t_0<0=t_1$. In view of Lemma  \ref{12102017-1634} there is $\alpha>0$ and a sequence $\alpha_n \searrow 0^+$ (recall that $\alpha_n$ is independent of the choice of $t_0$) such that, for all $\bar u\in M_{R}$ and $n\geq n_0$,
$$
\int_{\R^N \setminus B(0,n)} |\mathbf{Q} \bar u|^2 d x \leq
e^{2\alpha t_0} \|\mathbf{Q} u(t_0)\|_{L^2}^2 + \alpha_n \leq
e^{2\alpha t_0} R^2 + \alpha_n<\eps,
$$
where $u:(-\infty,0]\to H^1(\R^N)$ is the solution of $\Phi^\lambda$  such that
$u(0)=\bar u$, provided that $e^{2\alpha t_0}R^2<\eps/2$ and $\alpha_n<\eps/2$ for $n\geq n_0$.\hfill $\square$

\begin{remark}\label{admiss-arg} {\em  Conclusions of Lemma \ref{12102017-1634} and Proposition \ref{26092017-1533} stay true if the projection ${\Q}$ is replaced by the identity on $L^2(\R^N)$.\hfill $\square$
}\end{remark}

\begin{corollary}{\em (Comp. \cite{Prizzi-FM})}
\label{admiss}
Any bounded set $M\subset H^1$ is admissible with respect to $\{\Phi^\lambda\}_{\lambda\in [\lambda_0-\delta,\lambda+\delta]}$.
\end{corollary}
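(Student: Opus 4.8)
The plan is to unwind the definition of admissibility and reduce the required $H^1$-precompactness of the end-points to an $L^2$-precompactness of suitable intermediate states, which the tail estimate of Lemma \ref{12102017-1634} delivers, followed by a parabolic smoothing step furnished by Proposition \ref{12122016-1244}(ii). Concretely, let $R>0$ bound $M$ in $H^1$, and take sequences $t_n\to+\infty$, $(\bar u_n)$ in $H^1$ and $\lambda_n\to\lambda_0$ with $\lambda_n\in[\lambda_0-\delta,\lambda_0+\delta]$ and $\Phi^{\lambda_n}_{[0,t_n]}(\bar u_n)\subset M$. Writing $u_n(t):=\Phi^{\lambda_n}_t(\bar u_n)$, we then have $\|u_n(t)\|_{H^1}\le R$ for all $t\in[0,t_n]$, and the goal is to extract from $(u_n(t_n))$ a subsequence converging in $H^1$.

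First I would fix $\tau>0$ and, for $n$ large enough that $t_n\ge\tau$, consider the intermediate states $\bar v_n:=u_n(t_n-\tau)$, which still satisfy $\|\bar v_n\|_{H^1}\le R$. Applying Lemma \ref{12102017-1634} with $\mathbf{Q}$ replaced by the identity (Remark \ref{admiss-arg}) to the restriction of $u_n$ to $[0,t_n-\tau]$ yields $\alpha>0$ and a sequence $\alpha_m\searrow 0^+$, both independent of $n$, such that $\int_{\R^N\setminus B(0,m)}|\bar v_n|^2\,dx\le e^{-2\alpha(t_n-\tau)}R^2+\alpha_m$ for all large $m$. Since $t_n-\tau\to+\infty$, given $\eps>0$ one first chooses $m$ with $\alpha_m<\eps/2$ and then $n$ large with $e^{-2\alpha(t_n-\tau)}R^2<\eps/2$; absorbing the finitely many remaining indices (each $\bar v_n\in L^2$ has a small tail) produces a uniform $L^2$-tail bound for the whole sequence $(\bar v_n)$. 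By Remark \ref{comp}(2), the bounded set $\{\bar v_n\}\subset H^1$ is then relatively compact in $L^2$, so after passing to a subsequence $\bar v_n\to\bar v$ in $L^2$ for some $\bar v\in L^2$; moreover $\bar v\in H^1$ with $\|\bar v\|_{H^1}\le R$, since $(\bar v_n)$ is bounded in $H^1$ and hence weakly $H^1$-convergent to its $L^2$-limit.

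It remains to propagate this $L^2$-convergence of the data forward over the window of length $\tau$ and to upgrade it to $H^1$. Since $\Phi^{\lambda_n}_t(\bar v_n)=u_n(t_n-\tau+t)$ remains in $M$, hence bounded by $R$ in $H^1$, for all $t\in[0,\tau]$, and $\bar v_n\to\bar v$ in $L^2$ while $\lambda_n\to\lambda_0$, Proposition \ref{12122016-1244}(ii) applies with $T=\tau$ and gives $\Phi^{\lambda_n}_t(\bar v_n)\to\Phi^{\lambda_0}_t(\bar v)$ in $H^1$, uniformly on compact subsets of $(0,\tau]$. Evaluating at $t=\tau$ yields $u_n(t_n)=\Phi^{\lambda_n}_\tau(\bar v_n)\to\Phi^{\lambda_0}_\tau(\bar v)$ in $H^1$, which is precisely the convergent subsequence required by the definition of admissibility with respect to $\{\Phi^\lambda\}_{\lambda\in[\lambda_0-\delta,\lambda_0+\delta]}$.

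The main obstacle is the non-compactness coming from the unbounded domain $\R^N$: neither the embedding $H^2\hookrightarrow H^1$ nor the semiflow $\Phi^\lambda$ is compact, so the end-points $u_n(t_n)$ cannot be shown precompact by any direct regularity estimate. The whole point is that this non-compactness is defeated in two separated steps — the exponential tail estimate of Lemma \ref{12102017-1634}, which controls the mass escaping to spatial infinity and, thanks to $t_n-\tau\to+\infty$, forces genuine $L^2$-precompactness of the intermediate data $\bar v_n$, and the analytic smoothing encoded in Proposition \ref{12122016-1244}(ii), which converts $L^2$-closeness of the data at time $t_n-\tau$ into $H^1$-closeness of the solutions at time $t_n$. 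I expect the only delicate bookkeeping to be verifying that $\alpha$ and $(\alpha_m)$ in Lemma \ref{12102017-1634} are genuinely independent of $n$ (so that the tail bound is uniform) and that the $H^1$-bound required by Proposition \ref{12122016-1244}(ii) holds on the whole window $[0,\tau]$; both are immediate from $\Phi^{\lambda_n}_{[0,t_n]}(\bar u_n)\subset M$ together with $t_n\to+\infty$.
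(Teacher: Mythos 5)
Your proof is correct and follows essentially the same route as the paper: backtrack by a fixed time, use Lemma \ref{12102017-1634} (with $\mathbf{Q}$ replaced by the identity as in Remark \ref{admiss-arg}) together with Remark \ref{comp}(2) to get $L^2$-precompactness of the intermediate states, and then propagate forward with Proposition \ref{12122016-1244}(ii) to obtain $H^1$-convergence of the end-points. The only cosmetic difference is that you spell out the uniformization of the tail bound over the finitely many small indices, which the paper leaves implicit.
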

\begin{proof} Take sequences $t_m\to \infty$, $(u_m)\in H^1$ and $\lambda_m\to\lambda\in [\lambda_0-\delta,\lambda_0+\delta]$ such that $\Phi^{\lambda_m}_{[0,t_m]}(u_m)\subset M$ and $R>0$ such that $M
\subset D_{H^1}(0,R):=\{u\in H^1\mid \|u\|_{H^1}\leq R\}$.
With no loss of generality we may assume that $t_m>t_0$ for all $m$.
Then, for all $m$,
$$
\Phi_{t_m}^{\lma_m} (u_m) = \Phi_{t_0}^{\lma_m} (z_m)
$$
where $z_m := \Phi_{t_m-t_0}^{\lma_m}(u_m)$. It follows from Lemma \ref{12102017-1634} that, for all $m, n\in \N$,
$$
\int_{\R^N\setminus B(0,n)} |z_m(x)|^2\, dx \leq e^{-2\alpha (t_m-t_0)}    \|u_m\|_{L^2}+\alpha_n \leq R^2 e^{-2\alpha (t_m-t_0)}+\alpha_n
$$
where $\alpha_n\to 0^+$ as $n\to \infty$. This, in view of Remark \ref{comp} (2), means that the sequence $(z_m)$ is relatively compact in $L^2$. Now, by the weak relative compactness of bounded sets in $H^1$, there exists $z\in H^1$ such that (up to a subsequence), $z_m \rightharpoonup z$ (weakly) in $H^1$
and $z_m \to z$ in $L^2$. Thus, by Proposition \ref{12122016-1244}, $\Phi^{\lambda_m}_{t_m}(u_m)=\Phi^{\lambda_m}_{t_0}(z_n)\to \Phi_{t_0}^{\lambda}(z)$. \end{proof}

\begin{remark}\label{after-admiss} {\em
(1) Observe that if $u:\R\to H^1$ is a full bounded solution of $\Phi^\lma$ for some $\lma\in [\lma_0-\delta, \lma_0 + \delta]$, then the set $u(\R)$ is relatively compact (in $H^1$). Indeed: for any $(t_n) \in \R$ one has $u(t_n)=\Phi_{n}^{\lma} (z_n)$ with $z_n = u(t_n-n)$, $n\in \N$, that are contained in a bounded set; hence, by Corollary \ref{admiss}, $(u(t_n))$ contains a convergent subsequence.\\
\indent (2) Let the functional $J_\lma:H^1\to \R$, $\lma\in [\lma_0-\delta, \lma_0+\delta]$, be given by
$$
J_\lambda(v):= \frac{1}{2} \int_{\R^N} (|\nabla u|^2 + V(x)|u|^2 - \lma |u|^2)\, dx - \int_{\R^N}  F (x,u)\, dx
$$	
where $F(x,s):=\int_{0}^{s} f(x,\tau)\,d\tau$. Then, for any solution $u:(t_0,t_1)\to H^1$ of $\Phi^\lambda$, one has
$$
\frac{d}{dt}\left[ J_\lambda(u(t)) \right] = - \|\dot u(t)\|_{L^2}^2 \ \mbox{ for each } \ t\in (t_0,t_1).
$$
This means that $J_\lma$ is a Liapunov-function for $\Phi^\lma$, i.e. it decreases  along solutions of $\Phi^\lambda$. It is also clear that if a solution $u$ is nonconstant, then so is $t\to J(u(t))$. Therefore, if  $u:\R\to H^1$ is a full bounded solution of $\Phi^\lma$, then the limit sets $\alpha(u)$ and $\omega(u)$ consists only of equilibria of $\Phi^\lma$ (see \cite[Prop. 5.3]{rybakowski}).\hfill $\square$
}
\end{remark}

The following Conley index formula, obtained by linearization and  Theorem \ref{11052015-1744}, will be used in the sequel.
\begin{proposition} \label{13122016-1923} {\em (comp. \cite[Theorem 3.3]{Prizzi-FM})}
Under assumptions \eqref{24062015-1047},  \eqref{17022016-1449} and \eqref{17022016-1450}, suppose that $\lambda\not\in\sigma({\A})$
and $\lambda<\alpha_\infty$. Denote by $K(\Phi^\lambda)$ the set of all $\bar u\in H^1$ such that there exists a bounded solution $u:\R\to H^1$ of $\Phi^\lambda$ such that $u(0)=\bar u$. Then $K(\Phi^\lambda)$ is bounded, isolated invariant with respect to $\Phi^\lambda$, $(\Phi^\lambda,K(\Phi^\lambda))\in {\mathcal I}(H^1)$  and the Conley index
$$
h(\Phi^\lambda, K(\Phi^\lambda)) = \Sigma^{k(\lma)}
$$
where $k(\lambda)$ is the total multiplicity of the negative eigenvalues of ${\A}-\lambda {\I}$, i.e. eigenvalues of $-\Delta+V$ less than $\lambda$.\hfill $\square$
\end{proposition}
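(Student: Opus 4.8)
The plan is to bound $K(\Phi^\lambda)$ a priori in $H^1$, to use a large ball as an admissible isolating neighborhood, and then to deform $\Phi^\lambda$ to the linear semiflow generated by $-(\A-\lambda\I)$ by scaling the nonlinearity, invoking the continuation property (H4) together with the linear index formula of Theorem \ref{11052015-1744}.

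First I would derive the a priori bound. Since $\lambda\notin\sigma(\A)$ the gap $\beta:=\dist(\lambda,\sigma(\A))$ is positive, and since $\lambda<\alpha_\infty$, Remark \ref{spectra}~(4) guarantees that $\sigma(\A)\cap(-\infty,\lambda)$ consists of finitely many eigenvalues of finite total multiplicity $k(\lambda)$, so the spectral subspace $X_-$ associated with $\sigma(\A)\cap(-\infty,\lambda)$ is finite-dimensional. Writing $L^2=X_-\oplus X_+$ with orthogonal projections $\Q_-,\Q_+$, a bounded full solution $u\colon\R\to H^1$ of $\Phi^\lambda$ splits as $u=\Q_-u+\Q_+u$ with $\frac{d}{dt}\Q_\pm u=-(\A-\lambda\I)\Q_\pm u+\Q_\pm\F(u)$. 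On $X_+$, where $\A-\lambda\I\geq\beta$, the variation-of-constants formula with the lower limit sent to $-\infty$ yields $\Q_+u(t)=\int_{-\infty}^t e^{-(\A-\lambda\I)(t-\tau)}\Q_+\F(u(\tau))\,d\tau$, whence $\|\Q_+u(t)\|_{L^2}\leq\beta^{-1}\|m\|_{L^2}$ by \eqref{w2}. On the finite-dimensional subspace $X_-$, where $\A-\lambda\I\leq-\beta$, every nonzero homogeneous solution is unbounded, so the unique bounded solution of the inhomogeneous equation is $\Q_-u(t)=-\int_t^{\infty}e^{-(\A-\lambda\I)(t-\tau)}\Q_-\F(u(\tau))\,d\tau$, giving $\|\Q_-u(t)\|_{L^2}\leq\beta^{-1}\|m\|_{L^2}$ as well. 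This produces a uniform $L^2$ bound on $K(\Phi^\lambda)$; the sectoriality of $\A$ and the standard analytic-semigroup smoothing estimates, applied through the variation-of-constants formula on $[t-1,t]$, then upgrade it to a uniform $H^1$ bound, say of radius $\rho$. Hence $K(\Phi^\lambda)$ is bounded.

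Next I would put $N:=\{u\in H^1\mid\|u\|_{H^1}\leq\rho'\}$ for some $\rho'>\rho$. Any full solution remaining in $N$ is bounded, hence obeys the same a priori bound and in fact lies in the ball of radius $\rho$; therefore $\mathrm{Inv}_{\Phi^\lambda}(N)=K(\Phi^\lambda)\subset\mathrm{int}\,N$, so $K(\Phi^\lambda)$ is isolated invariant with isolating neighborhood $N$. As $N$ is bounded, Corollary \ref{admiss} makes it $\Phi^\lambda$-admissible, and thus $(\Phi^\lambda,K(\Phi^\lambda))\in\mathcal{I}(H^1)$.

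For the index, I would consider the family $\Phi^{\lambda,s}$ generated by $\dot u=-(\A-\lambda\I)u+s\F(u)$, $s\in[0,1]$, so that $\Phi^{\lambda,1}=\Phi^\lambda$ and $\Phi^{\lambda,0}$ is the linear semiflow $t\mapsto e^{-(\A-\lambda\I)t}$. Since $\|s\F(u)\|_{L^2}\leq\|m\|_{L^2}$ for all $s$, the preceding a priori bound is uniform in $s$, so $K_s:=\mathrm{Inv}_{\Phi^{\lambda,s}}(N)\subset\mathrm{int}\,N$ for every $s$, while the estimates of Lemma \ref{12102017-1634} and the argument of Corollary \ref{admiss} apply verbatim with $s\F$ in place of $\F$, making $N$ admissible with respect to $\{\Phi^{\lambda,s}\}_{s\in[0,1]}$; continuity of the family follows as in Proposition \ref{12122016-1244}. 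Property (H4) then gives $h(\Phi^{\lambda,1},K_1)=h(\Phi^{\lambda,0},K_0)$. Finally, because $\lambda\notin\sigma(\A)$, the $C_0$ analytic semigroup $e^{-(\A-\lambda\I)t}$ on $H^1$ is hyperbolic with unstable subspace $X_-$ of dimension $k(\lambda)<\infty$, so Theorem \ref{11052015-1744} yields $K_0=\{0\}$ and $h(\Phi^{\lambda,0},\{0\})=\Sigma^{k(\lambda)}$, completing the proof. The main obstacle I expect is the a priori bound, chiefly the passage from the $L^2$ to the $H^1$ estimate and the identification of the unstable component with the backward integral via the fact that nonzero homogeneous solutions on $X_-$ are unbounded; once this bound is seen to be uniform in $s$, the admissibility of the homotopy is a rerun of Lemma \ref{12102017-1634} and the index reduces to the cited linear theorem.
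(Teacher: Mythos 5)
The paper does not actually prove this proposition: it is stated with only the reference ``comp.\ [Theorem 3.3]{Prizzi-FM}'' and closed immediately, so the intended argument is the linearization scheme of Prizzi. Your proposal is a correct reconstruction of exactly that scheme, and I see no genuine gap: the a priori bound via the spectral splitting $L^2=X_-\oplus X_+$ at level $\lambda$ (forward integral on $X_+$ where $\A-\lambda\I\geq\beta$, backward integral on the finite-dimensional $X_-$ where nonzero homogeneous solutions blow up forward in time), the analytic-semigroup smoothing step from $L^2$ to $H^1$, the large ball as an admissible isolating neighborhood, the homotopy $s\F$ with its $s$-uniform bound $\|s\F(u)\|_{L^2}\leq\|m\|_{L^2}$, and the reduction to Theorem \ref{11052015-1744} all work. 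Two small points deserve explicit mention rather than the appeal ``verbatim'': Lemma \ref{12102017-1634} and Corollary \ref{admiss} are stated in the paper under hypothesis \eqref{alfa-infty} with $\lambda_0\in\sigma(\A)$, whereas here $\lambda\notin\sigma(\A)$; their proofs use only $\lambda<\alpha_\infty$ (to get the positive decay rate $\alpha$) and the uniform bound on the nonlinearity, so they do transfer, but one should say so. Likewise the continuity of the family $\{\Phi^{\lambda,s}\}_{s\in[0,1]}$ is not literally Proposition \ref{12122016-1244} (which varies $\lambda$, not the nonlinearity), though the same reference \cite{Prizzi-FM} covers it. Finally, for Theorem \ref{11052015-1744} one needs hyperbolicity of the linear semigroup on the phase space $H^1$, not just on $L^2$; this follows from analyticity and the spectral mapping theorem together with $0\notin\sigma(\A-\lambda\I)$, and is worth a sentence.
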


\section{Necessary conditions}

Below we provide necessary conditions for bifurcation from infinity and  study
additional properties of bifurcation sequences.

\begin{theorem}\label{s16022016-1655}
If a bifurcation from infinity for \eqref{24062015-1032}
occurs at $\lma_0\not\in\sigma_e({\A})$, i.e., there is a sequence $(u_n,\lambda_n)$ solving  \eqref{24062015-1032} with $\lambda=\lambda_n$, $\|u_n\|_{H^1}\to\infty$, $\lambda_n\to\lambda_0$, then $\lambda_0$ lies in $\sigma_p({\A})$ the point spectrum of $\A$ and $\|{\P}u_n\|_{L^2},\|\nabla{\P}u_n\|_{L^2}\to\infty$ as $n\to\infty$. This implies that
$\|u_n\|_{L^2}, \|\nabla u_n\|_{L^2}\to\infty$, too. Moreover the sequences $(\|\Q u_n\|_{L^2})$ and $(\|\nabla\Q u_n\|_{L^2})$ are bounded.\\
\indent  If, additionally $\lambda_0<\alpha_\infty$, then the sequences $(\|u_n\|_{L^2})$ and $(\|\nabla u_n\|_{L^2})$ have the same growth rates, i.e.,  there  are constants $C_1, C_2>0$ such that, for all large $n$,
\begin{equation}\label{24012018-1201}
C_1 \|u_n\|_{L^2} \leq \|\nabla u_n\|_{L^2} \leq C_2 \|u_n\|_{L^2};
\ee
a similar estimate holds for  $\|{\P}u_n\|_{L^2}$ and $\|\nabla{\P}u_n\|_{L^2}$ with large $n$.
\end{theorem}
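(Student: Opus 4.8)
The plan is to read everything off the abstract identity $(\A-\lambda_n\I)u_n=\F(u_n)$ (recall $u_n\in H^2$) together with the single a priori bound $\|\F(u_n)\|_{L^2}\le\|m\|_{L^2}$ supplied by \eqref{w2}, and the spectral splitting attached to $\lambda_0$. First I would settle $\lambda_0\in\sigma_p(\A)$ by contradiction: if $\lambda_0\in\rho(\A)$, then $\lambda_n\in\rho(\A)$ for large $n$ and, since $D(\A)=H^2$ with graph norm equivalent to the $H^2$-norm (Remark \ref{spectra}(3)), the resolvents $(\A-\lambda_n\I)^{-1}$ are uniformly bounded as maps $L^2\to H^2$; hence $u_n=(\A-\lambda_n\I)^{-1}\F(u_n)$ would satisfy $\|u_n\|_{H^2}\le\c\|m\|_{L^2}$, contradicting $\|u_n\|_{H^1}\to\infty$. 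Thus $\lambda_0\in\sigma(\A)$, and since $\lambda_0\notin\sigma_e(\A)$, Remark \ref{spectra}(4) makes it an isolated eigenvalue of finite multiplicity; in particular $\lambda_0\in\sigma_p(\A)$, and the projections $\P,\Q$ and the uniform resolvent bound \eqref{odwotne} become available.

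The heart of the argument is the dichotomy ``$\Q u_n$ bounded / $\P u_n$ unbounded''. Applying $\Q$ to the identity and using that $\Q$ commutes with $\A$ (as $X$ is $\A$-invariant and $u_n\in H^2$) gives $\Q u_n=[(\A-\lambda_n\I)|_X]^{-1}\Q\F(u_n)$. Because $\|\Q\F(u_n)\|_{L^2}\le\|m\|_{L^2}$ and \eqref{odwotne} bounds this inverse into $H^2$ uniformly for $\lambda_n\in[\lambda_0-\delta,\lambda_0+\delta]$, the sequence $(\Q u_n)$ is bounded in $H^2$; in particular $(\|\Q u_n\|_{L^2})$ and $(\|\nabla\Q u_n\|_{L^2})$ are bounded. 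Since $u_n=\P u_n+\Q u_n$ and $\|u_n\|_{H^1}\to\infty$, this forces $\|\P u_n\|_{H^1}\to\infty$.

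The remaining claims are consequences of $\dim X_0<\infty$. As $X_0$ is spanned by finitely many eigenfunctions and no nonzero $L^2(\R^N)$-function has vanishing gradient, the three norms $\|\cdot\|_{L^2}$, $\|\nabla\cdot\|_{L^2}$ and $\|\cdot\|_{H^1}$ are pairwise equivalent on $X_0$. Hence $\|\P u_n\|_{L^2}\to\infty$ and $\|\nabla\P u_n\|_{L^2}\to\infty$, and the asserted two-sided estimate relating $\|\P u_n\|_{L^2}$ and $\|\nabla\P u_n\|_{L^2}$ is exactly this equivalence. By $L^2$-orthogonality $\|u_n\|_{L^2}^2=\|\P u_n\|_{L^2}^2+\|\Q u_n\|_{L^2}^2$, so $\|u_n\|_{L^2}\ge\|\P u_n\|_{L^2}\to\infty$, while $\|\nabla u_n\|_{L^2}\ge\|\nabla\P u_n\|_{L^2}-\|\nabla\Q u_n\|_{L^2}\to\infty$. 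For \eqref{24012018-1201} I would combine $\|\nabla u_n\|_{L^2}\le\|\nabla\P u_n\|_{L^2}+\|\nabla\Q u_n\|_{L^2}\le\c\,\|\P u_n\|_{L^2}+\c$ with $\|\P u_n\|_{L^2}\le\|u_n\|_{L^2}$ for the upper bound, and $\|\nabla u_n\|_{L^2}\ge c_0\|\P u_n\|_{L^2}-\c$ with $\|\P u_n\|_{L^2}\ge\frac12\|u_n\|_{L^2}$ (valid for large $n$, since $\Q u_n$ is bounded while $\|u_n\|_{L^2}\to\infty$) for the lower one; the additive constants are absorbed because $\|u_n\|_{L^2}\to\infty$.

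The main obstacle is precisely the uniform $H^2$-bound on $(\Q u_n)$: it hinges on the uniform invertibility of $(\A-\lambda\I)|_X$ near $\lambda_0$, i.e. on $\lambda_0$ being isolated in $\sigma(\A)$ — already guaranteed by $\lambda_0\notin\sigma_e(\A)$. The additional hypothesis $\lambda_0<\alpha_\infty$ in the second part places $\lambda_0$ strictly below $\sigma_e(\A)$ (by \eqref{spectra 1}), so that the entire spectrum below $\lambda_0$ is discrete and $\dim X_-<\infty$, giving the cleanest form of the splitting $L^2=X_0\oplus X$ underlying \eqref{odwotne}; everything else is soft spectral theory and finite-dimensional norm equivalence.
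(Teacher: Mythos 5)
Your proposal is correct, and it reaches the conclusions by a genuinely different (and shorter) route than the paper at the two technical pinch points. For the boundedness of $(\Q u_n)$, the paper does not invoke \eqref{odwotne}: it first bounds $\|\Q_\pm u_n\|_{L^2}$ by testing $(\A-\lambda_n\I)u_n=\F(u_n)$ against $\Q_\pm u_n$ and using the spectral gap $\pm\la(\A-\lambda_n\I)v,v\ra_{L^2}\geq c\|v\|_{L^2}^2$ on $X_\pm$, and then bounds $\|\nabla\Q u_n\|_{L^2}$ through the quadratic form identity $\|\nabla\Q u_n\|_{L^2}^2+\la(\V-\lambda_n\I)\Q u_n,\Q u_n\ra_{L^2}=\la\F(u_n),\Q u_n\ra_{L^2}$ combined with the Gagliardo--Nirenberg inequality. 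You instead read everything off the uniform resolvent bound \eqref{odwotne}, obtaining at once an $H^2$ bound $\|\Q u_n\|_{H^2}\leq\c\|m\|_{L^2}$; this is legitimate (the constant in \eqref{odwotne} is uniform on $[\lambda_0-\delta,\lambda_0+\delta]$ because $\dist(\lambda,\sigma(\A)\setminus\{\lambda_0\})$ stays bounded away from zero and the graph norm of $\A$ is equivalent to the $H^2$-norm) and strictly stronger than what the paper's computation yields. Likewise, for \eqref{24012018-1201} the paper runs the Gagliardo--Nirenberg estimate a second time on the full quadratic form of $\A-\lambda_n\I$, and this is precisely where the hypothesis $\lambda_0<\alpha_\infty$ enters (to get the coercive term $\eta\|u_n\|_{L^2}^2$ from $V_\infty$ outside a large ball); you instead deduce the two-sided estimate from the splitting $u_n=\P u_n+\Q u_n$, the $H^1$-boundedness of $\Q u_n$, the equivalence of $\|\cdot\|_{L^2}$ and $\|\nabla\cdot\|_{L^2}$ on the finite-dimensional space $X_0$ (valid since a gradient-free $L^2(\R^N)$-function vanishes), and $\|u_n\|_{L^2}\to\infty$ to absorb additive constants. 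A noteworthy by-product of your route is that \eqref{24012018-1201} is obtained without ever using $\lambda_0<\alpha_\infty$, i.e.\ under the hypotheses of the first part alone; the paper's extra assumption is needed only for its particular method of proof. The first step ($\lambda_0\in\sigma_p(\A)$ by a resolvent contradiction) coincides with the paper's argument up to the cosmetic difference that the paper normalizes $z_n=u_n/\|u_n\|_{H^1}$ while you bound $u_n$ directly.
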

\noindent {\em Proof}. Let $\rho_n:=\|u_n\|_{H^1}$; we may assume that $\rho_n>0$ for all $n$. Let
$z_n := \varrho_n^{-1} u_n$; then $\|z_n\|_{H^1}=1$ and $\|z_n\|_{L^2}\leq \c$
Suppose to the contrary that $\lambda_0\not\in\sigma_p({\A})$. Since $\lambda_0\not\in\sigma_e({\A})$, this implies that $\lambda_0\in \rho({\A})$, the resolvent set of $\A$. We have
$$
(\mathbf{A}-\lambda_0{\I}) z_n= (\lma_n-\lambda_0) z_n + \rho_{n}^{-1} \mathbf{F}(\rho_n z_n).
$$
Clearly $v_n:=(\lambda_n-\lambda_0)z_n+\rho_n^{-1}\mathbf{F}(\rho_n z_n)\to 0$ as $n\to\infty$ (in $L^2$). Hence $z_n=({\A}-\lambda_0{\I})^{-1}v_n\to 0$ in $H^1$: a contradiction.\\
\indent Since $\lambda_0$ is isolated in $\sigma({\A})$, there is $c>0$ such that for large $n$
we have $\la ({\A}-\lambda_n{\I})v,v\ra_{L^2}\geq c\|v\|_{L^2}^{2}$ for $v\in X_+$ and
$\la ({\A}-\lambda_n{\I})w,w\ra_{L^2}\leq -c\|w\|_{L^2}^{2}$ for $w\in X_-$. This implies that for large $n$
\begin{align*}
c\|{\Q}_\pm u_n\|^2_{L^2}&\leq \pm\la({\A}-\lambda_n{\I}){\Q}_\pm u_n,{\Q}_\pm u_n\ra_{L^2}=\pm\la({\A}-\lambda_n{\I})u_n,{\Q}_\pm u_n\ra_{L^2}=\\
&=\pm\la {\F}(u_n),{\Q}_\pm u_n\ra_{L^2}\leq \|m\|_{L^2}\|{\Q}_\pm u_n\|_{L^2}.\end{align*}
Therefore for large $n$
\be\label{b1}\|\Q u_n\|_{L^2}\leq 2c^{-1}\|m\|_{L^2}.\ee
On the other hand
$$\|\nabla{\Q}u_n\|_{L^2}^2+\la({\V}-\lambda_n{\I}){\Q}u_n,{\Q}u_n\ra_{L^2}=\la({\A}-
\lambda_n{\I})u_n,{\Q}u_n\ra_{L^2} =\la{\F}(u_n),{\Q}u_n\ra_{L^2}.
$$
Hence
$$\|\nabla{\Q}u_n\|^2_{L^2}\leq\|V_\infty-\lambda_n\|_{L^\infty}\|{\Q}u_n\|_{L^2}^2+
\|V_0\|_{L^p}\|{\Q}u_n\|^2_{L^s}+\|m\|_{L^2}\|{\Q}u_n\|_{L^2},$$
where $s:=2p/(p-1)$.  Clearly, $s>2$ and, if $N\geq 3$, one has also $s<2_{N}^*=2N/(N-2)$. In view of the Gagliardo-Nirenberg inequality (see Remark \ref{G-N})
\be\label{b2}\|\nabla{\Q}u_n\|^2_{L^2}\leq\|V_\infty-\lambda_n\|_{L^\infty}\|{\Q}u_n\|_{L^2}^2
+C^2\|V_0\|_{L^p}\|\nabla{\Q}u_n\|_{L^2}^{2\theta}\|{\Q}u_n\|_{L^2}^{2(1-\theta)}+
\|m\|_{L^2}\|{\Q}u_n\|_{L^2}
\ee
for some $C>0$ and $\theta\in (0,1)$. This, together with \eqref{b1}, implies that the sequence $(\|\nabla{\Q}u_n\|_{L^2})$ is bounded.\\
\indent The same argument (replacing ${\Q}$ in \eqref{b2} by the identity $\I$) shows that would $(\|\nabla u_n\|_{L^2})$ be bounded if $(\|u_n\|_{L^2})$ were bounded. Since $\|u_n\|_{H^1}\to\infty$, we deduce therefore  that $\|u_n\|_{L^2}\to\infty$. Now
$\|{\P}u_n\|_{L^2}^2=\|u_n\|_{L^2}^2-\|{\Q}u_n\|^2_{L^2}$, so $\|{\P}u_n\|_{L^2}\to\infty$ in view of \eqref{b1}. This implies that also $\|\nabla{\P}u_n\|_{L^2}\to\infty$ because $\dim X_0<\infty$. Since
$$\|\nabla u_n\|_{L^2}\geq |\|\nabla{\P}u_n\|_{L^2}-\|\nabla{\Q}u_n\|_{L^2}|,$$
we finally infer that $\|\nabla u_n\|_{L^2}\to\infty$.\\
\indent Now assume that $\lambda_0<\alpha_\infty$. Take $\eta>0$ such that $\lambda_0+3\eta<\alpha_\infty$ and $R>0$ such that $V_\infty(x)\geq \alpha_\infty-\eta$ for a. a. $x\in\R^N$ with $|x|>R$. Then for large  $n\geq 1$, $V_\infty(x)-\lambda_n>\eta$ a.e. on $\{x\in\R^N\mid |x|>R\}$.\\
\indent For large $n$ we have
\begin{align*}
\int_{\R^N}|\nabla u_n|^2\,dx+\eta\int_{\R^N}u^2\,dx&\leq\int_{\R^N}|\nabla u|^2\,dx+
\int_{\{|x|>R\}}(V_\infty(x)-\lambda_n)u_n^2\,dx+\eta\int_{\{|x|\leq R\}}u_n^2\,dx=\\
&=\int_{\R^N}(|\nabla u|^2+(V_\infty(x)-\lambda_n)u_n^2)\,dx+\int_{\{|x|\leq R\}}(\eta-V_\infty(x)+\lambda_n)u_n^2\,dx.
\end{align*}
Hence
\be\label{wk}\|\nabla u_n\|_{L^2}^2+\eta\|u_n\|_{L^2}^2\leq-\int_{\R^N}V_0(x)u_n^2\,dx+\int_{\{|x|\leq R\}}(\eta-V_\infty(x)+\lambda_n)u_n^2\,dx+\int_{\R^N}f(x,u_n)u_n\,dx.\ee
Take $\xi>0$ such that $\xi\geq |\eta-V_\infty(x)-\lambda_n|$ for all large $n$ and let $V_1(x)=\xi$ if $|x|\leq R$ and $V_1(x)=0$ otherwise. Then $V_1\in L^p$ and, by \eqref{wk} we have
$$\|\nabla u_n\|_{L^2}^2+\eta\|u_n\|_{L^2}^2\leq \|V_0+V_1\|_{L^p}\|u_n\|^2_{L^s}+\|m\|_{L^2}\|u_n\|_{L^2}$$
and, again in virtue of the Gagliardo-Nirenberg inequlaity, we get that
\be\label{b3}
\|\nabla u_n\|_{L^2}^{2}+\eta\|u_n\|_{L^2}^{2}
\leq C^2 \|V_0+V_1\|_{L^p} \|\nabla u_n\|_{L^2}^{2\theta}\|u_n\|_{L^2}^{2(1-\theta)}
+\|m\|_{L^2} \|u_n\|_{L^2}
\ee
with constants $C>0$ and $\theta\in (0,1)$. For large $n$,
$$
\left(\frac{\|\nabla u_n\|_{L^2}}{\|u_n\|_{L^2} }\right)^2 + \eta
\leq C^2\|V_0+V_1\|_{L^p} \left(\frac{\|\nabla u_n\|_{L^2}}{\|u_n\|_{L^2} }\right)^{2\theta} + 1
$$
and
$$
1+ \eta\left(\frac{\|u_n\|_{L^2} }{\|\nabla u_n\|_{L^2}}\right)^2
\leq C^2\|V_0+V_1\|_{L^p} \left(\frac{\|u_n\|_{L^2}}{\|\nabla u_n\|_{L^2}}\right)^{2(1-\theta)} + \frac{\|u_n\|_{L^2} }{\|\nabla u_n\|_{L^2}},
$$
which gives the existence of $C_1, C_2>0$ satisfying  \eqref{24012018-1201}. A similar argument shows that growth rates of $(\|{\P}u_n\|_{L^2})$ and $\|\nabla{\P}u_n\|_{L^2})$ are the same. \hfill $\square$

\begin{remark}\label{G-N} {\em
The {\em Gagliardo-Nirenberg inequality} (see \cite{Nirenberg} and  \cite{Agueh}) states  that given $1<r<s$ (with $s<2_{N}^*=\frac{2N}{N-2}$ if $N\geq 3$) there are $C>0$ and $\theta\in (0,1)$ such that for any $u\in H^1$
$$
\|u\|_{L^s}\leq C\|\nabla u\|_{L^2}^{\theta} \|u\|_{L^r}^{1-\theta} \ \
\mbox{ for all } \ \ u\in H^1.
\eqno \square$$		}
\end{remark}
Theorem \ref{s16022016-1655}  shows that bifurcating sequences $(u_n)$ are localized around the eigenspace $\Ker({\A}-\lambda_0{\I})$ having mass $\|u_n\|_{L^2}$ and energy of the same growth rate.
It generalizes \cite[Theorem 5.2 (iii)]{Stuart1}, where the case of a simple eigenvalue has been studied.

\section{Sufficient conditions - proof of Theorem \ref{24062015-1056}}
Recall the notation introduced in front of Lemma \ref{odd}. We start with the {\em proof of Theorem \ref{24062015-1056}} (i): assume \eqref{eigen}, let $\dim X_0$ be odd and suppose that there is no asymptotic bifurcation at $\lambda_0$. Taking smaller $\delta>0$ if necessary there is $r>0$ such that for all $\lambda\in [\lambda_0-\delta,\lambda_0+\delta]$ if $w\in H^2$ and $(\A-\lambda\I)w=\F(w)$, then $\|w\|_{H^1}\leq r$.\\
\indent Observe that $w\in H^2$, solves \eqref{pr1} with some $\lambda\in [\lambda_0-\delta,\lambda_0+\delta]$, i.e. $(\A-\lambda\I)w=\F(w)$,
if and only if $u:=\P w+(\A-\lambda\I)\Q w\in L^2$ solves
\be\label{odd2}u=\K(\lambda,u):=(1+\lambda-\lambda_0)\P u+\F(\P u+[(\A-\lambda\I)|_X]^{-1}\Q u)=
(1+\lambda-\lambda_0)\P u+\G(\lambda,u);\ee
see Lemma \ref{odd}. Here the nonlinearity $\K:[\lambda_0-\delta,\lambda_0+\delta]\times L^2\to L^2$ is continuous and, in view of Lemma \ref{odd}, completely continuous. Moreover \eqref{odd2} has no solutions if $|\lambda-\lambda_0|\leq\delta$ and $\|u\|_{L^2}$ is sufficiently large. Indeed if $u\in L^2$ solves \eqref{odd2}, where $|\lambda-\lambda_0|\leq\delta$, then $w:=\P u+[(\A-\lambda\I)|_X]^{-1}\Q u$ solves \eqref{pr1}, i.e.,
$\|\P u\|_{L^2}=\|\P w\|_{L^2}\leq\|w\|_{H^1}\leq r$. Hence $\|u\|_{L^2}\leq (1+\delta)r+\|m\|_{L^2}:= R_0$.
Therefore the Leray-Schauder fixed-point index $\ind(\K(\lambda,\cdot),B)$, where $B$ is the ball around 0 of radius $R>\max\{R_0,\delta^{-1}\|m\|_{L^2}\}$ in $L^2$, is well-defined and {\em independent} of $\lambda\in [\lambda_0-\delta,\lambda_0+\delta]$. It is immediate to see that if $\lambda=\lambda_0\pm\delta$, then $u\neq (1+\lambda-\lambda_0)\P u+t\G(\lambda,u)$ for $u\not\in B$ and $t\in [0,1]$. Hence, in view of the homotopy invariance and the restriction property of the index, for $\lambda=\lambda_0\pm\delta$
$$\ind(\K(\lambda,\cdot),B)=\ind((1\pm\delta)\P,B)=\ind((1\pm\delta)\I,B\cap X_0).$$
However
$$\ind((1-\delta)\I,B\cap X_0)=1, \;\ind((1+\delta)\I,B\cap X_0)=(-1)^{\dim X_0}=-1.$$
This is a contradiction.\hfill $\square$
\begin{remark} {\em The standard use of the Kuratowski-Whyburn lemma makes it easy to get a slightly better result in the context of Theorem \ref{24062015-1056} (i). Namely it appears that there exists a closed connected set $\Gamma\subset H^2\times\R$ of solutions to \eqref{24062015-1032} which contains a sequence $(u_n,\lambda_n)$ such that $\|u_n\|_{H^2}\to\infty$, $\lambda_n\to\lambda_0$.}\end{remark}
\indent Now we shall pass to the proof of Theorem \ref{24062015-1056} (ii).  We start with the geometric interpretation of the resonance assumptions in  spirit of \cite{Cesari} and  \cite{Kokocki}.
\begin{lemma}\label{25092017-1854} Assume that $M\subset X$. If either\\
\indent {\em (i)} condition $(LL)_\pm$ holds and $M$ is bounded in $L^2$, or\\
\indent {\em (ii)} condition $(SR)_\pm$ holds and $M$ relatively compact in $L^2$,\\
then there exist $R_0>0$ and $\alpha>0$ such that for all  $\bar v\in X_0$ with $\|\bar v\|_{L^2}\geq R_0$ and $\bar w\in M$
\begin{equation}\label{25092017-2303}
\pm\la\bar v, \mathbf{F}(\bar v+\bar w)\ra_{L^2} >\alpha.
\end{equation}
\end{lemma}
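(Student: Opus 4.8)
The plan is to argue by contradiction and reduce the uniform estimate to a limiting computation along a normalized sequence of eigenfunctions. Suppose \eqref{25092017-2303} fails for every choice of $R_0,\alpha>0$; taking $R_0=n$ and $\alpha=1/n$ one obtains $\bar v_n\in X_0$ with $\|\bar v_n\|_{L^2}\to+\infty$ and $\bar w_n\in M$ such that $\pm\la\bar v_n,\F(\bar v_n+\bar w_n)\ra_{L^2}\leq 1/n$. Set $\hat v_n:=\bar v_n/\|\bar v_n\|_{L^2}$. Since $\dim X_0<\infty$, after passing to a subsequence $\hat v_n\to\hat v$ in $X_0$ (hence in $L^2$, in $H^2$, and a.e.) with $\|\hat v\|_{L^2}=1$; in particular $\hat v$ is a nontrivial eigenfunction of $\A$ for $\lambda_0$. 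The qualitative input is the unique continuation property (as invoked in Remark \ref{LL}): a nontrivial eigenfunction cannot vanish on a positive–measure set, so $\R^N=\Omega_+\cup\Omega_-$ up to a null set, where $\Omega_\pm:=\{\pm\hat v>0\}$. Because $\bar v_n=\|\bar v_n\|_{L^2}\hat v_n$ with $\|\bar v_n\|_{L^2}\to+\infty$, one has $\bar v_n\to+\infty$ a.e.\ on $\Omega_+$ and $\bar v_n\to-\infty$ a.e.\ on $\Omega_-$. I treat the $+$ sign; the $-$ sign is symmetric.

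For the Landesman–Lazer case (i) I would normalize and aim to show $\liminf_n\la\hat v_n,\F(\bar v_n+\bar w_n)\ra_{L^2}\geq\int_{\R^N}(\check f_+\hat v^{+}-\hat f_-\hat v^{-})\,dx$, where $\hat v^{\pm}=\max\{0,\pm\hat v\}$. By the computation recorded in \eqref{ll} (this is exactly where $(LL)_+$ together with unique continuation enters), the right-hand side is strictly positive. On the other hand, dividing the standing inequality by $\|\bar v_n\|_{L^2}\geq n$ gives $\limsup_n\la\hat v_n,\F(\bar v_n+\bar w_n)\ra_{L^2}\leq 0$, a contradiction. To produce the lower bound I would write $\la\hat v_n,\F(\bar v_n+\bar w_n)\ra_{L^2}=\int\hat v_n\big(\check f_+\chi_{\Omega_+}+\hat f_-\chi_{\Omega_-}\big)\,dx+\int\hat v_n\big(f(\cdot,\bar v_n+\bar w_n)-\check f_+\chi_{\Omega_+}-\hat f_-\chi_{\Omega_-}\big)\,dx$; since $\hat v_n\to\hat v$ in $L^2$ and $|\check f_+|,|\hat f_-|\leq m\in L^2$, the first integral converges to the asserted limit, while the second must be shown to have nonnegative lower limit.

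This last step is the main obstacle, namely controlling the bounded but not necessarily $L^2$-convergent family $(\bar w_n)$; this is precisely why case (i) asks only for boundedness of $M$. On the part of $\Omega_+$ where $\bar w_n\geq -K$ one has $\bar v_n+\bar w_n\geq\bar v_n-K\to+\infty$ a.e., so $\liminf_n f(\cdot,\bar v_n+\bar w_n)\geq\check f_+$ there, and since $\hat v_n\to\hat v\geq0$ on $\Omega_+$ the corresponding contribution has nonnegative lower limit by a Fatou argument with the $L^1$-convergent minorant $|\hat v_n|\,m\to|\hat v|\,m$. On the remaining set $\{\bar w_n<-K\}$, whose measure is at most $\|\bar w_n\|_{L^2}^2/K^2$ uniformly in $n$, the integrand is bounded by $|\hat v_n|\,m$, whose integral over sets of small measure is small uniformly in $n$ (uniform integrability, from $\hat v_n\to\hat v$ in $L^2$); letting $K\to+\infty$ closes the estimate, and the symmetric bound on $\Omega_-$ uses $\hat f_-$ and the set $\{\bar w_n\leq K\}$.

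For the sign-condition case (ii) I would keep the weight $\bar v_n$ and split $\la\bar v_n,\F(\bar v_n+\bar w_n)\ra_{L^2}=\int(\bar v_n+\bar w_n)f(\cdot,\bar v_n+\bar w_n)\,dx-\int\bar w_n f(\cdot,\bar v_n+\bar w_n)\,dx$. Here relative compactness of $M$ lets me assume $\bar w_n\to\bar w$ in $L^2$ and a.e., so on $\Omega_\pm$ the argument $\bar v_n+\bar w_n\to\pm\infty$ a.e.; by $(SR)_+$ the first integrand is nonnegative and converges a.e.\ to $k_\pm$, whence Fatou gives $\liminf_n\geq\int_{\Omega_+}k_+\,dx+\int_{\Omega_-}k_-\,dx$. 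In the second integral $f(\cdot,\bar v_n+\bar w_n)\to0$ a.e.\ (since $sf(x,s)\to k_\pm\in L^\infty$ forces $f\to0$) and $|\bar w_n f|\leq|\bar w_n|\,m\to|\bar w|\,m$ in $L^1$, so the generalized dominated convergence theorem yields convergence to $0$. The strict positivity of $\int_{\Omega_+}k_++\int_{\Omega_-}k_-$ follows from $(SR)_+$ together with unique continuation, which guarantees that the positivity set of $k_\pm$ is not absorbed into the null set $\{\hat v=0\}$; as $\limsup_n\la\bar v_n,\F(\bar v_n+\bar w_n)\ra_{L^2}\leq0$ by hypothesis, this is the desired contradiction and completes the proof.
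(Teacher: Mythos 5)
Your proposal is correct and follows the same skeleton as the paper's proof: negate the conclusion, normalize $\bar v_n$ on the finite-dimensional space $X_0$, invoke unique continuation to split $\R^N$ into $\Omega_+\cup\Omega_-$ up to a null set, and conclude by Fatou against the \eqref{ll}-type positivity. Your case (ii) is essentially identical to the paper's (same splitting $\la\bar v_n+\bar w_n,\F(\bar v_n+\bar w_n)\ra_{L^2}-\la\bar w_n,\F(\bar v_n+\bar w_n)\ra_{L^2}$, same use of $f(x,s)\to 0$ and dominated convergence). The one place you diverge is the control of $\bar w_n$ in case (i), which you rightly identify as the crux and resolve by truncating at the level sets $\{\bar w_n<-K\}$ together with uniform integrability of $|\hat v_n|m$; this works, but the paper's route is shorter: since $M$ is bounded and $\rho_n:=\|\bar v_n\|_{L^2}\to\infty$, one has $\rho_n^{-1}\bar w_n\to 0$ in $L^2$, hence a.e.\ along a subsequence, so that $\bar v_n+\bar w_n=\rho_n(\hat v_n+\rho_n^{-1}\bar w_n)\to\pm\infty$ a.e.\ on $\Omega_\pm$ directly, and a single application of Fatou (with minorant $-\kappa m$, $\kappa$ an $L^2$-dominant of the $\hat v_n$) gives $\liminf_n\la\hat v_n,\F(\bar v_n+\bar w_n)\ra_{L^2}\geq\int_{\Omega_+}\check f_+\hat v+\int_{\Omega_-}\hat f_-\hat v>0$ without any truncation parameter. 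Your version buys nothing extra here (a subsequence extraction is harmless in a proof by contradiction), but it is not wrong.
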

\begin{proof}
We carry out the proof for $(LL)_+$ and $(SR)_+$; other cases may be treated analogously. Suppose to the contrary that for any $n\in\N$ there are $\bar v_n\in X_0$ and $\bar w_n\in M$ such that $\|\bar v_n\|_{L^2}\geq n$ and
\begin{equation}\label{31102017-1613}
\la\bar v_n, {\bf F}(\bar v_n+\bar w_n)\ra_{L^2} \leq n^{-1}.\end{equation}
Let $\rho_n:= \|\bar v_n\|_{L^2}$ and $\bar z_n:= \rho_n^{-1}\bar v_n$, $n\in\N$. Since $\dim X_0<\infty$,  we may assume that $\|\bar z_n-\bar z_0\|_{L^2}\to 0$ as $n\to\infty$, where $\bar z_0\in X_0$ and $\|\bar z_0\|_{L^2 (\R^N)}=1$. Therefore we may assume that $\bar z_n(x)\to \bar z_0(x)$ for a.a. $x\in\R^N$ and there is $\kappa\in L^2$ such that $|\bar z_n|\leq\kappa$ a.e.  In view of the so-called {\em unique continuation property} (see e.g.  \cite[Proposition 3, Remark 2]{Gossez}), $\bar z_0\neq 0$ a.e. Hence the set $\R^N\setminus (A_+\cup A_-)$, where $A_\pm:=\{x\in\R^N\mid \pm \bar z_0>0\}$, is of measure zero.\\
\indent Dividing \eqref{31102017-1613} by $\rho_n$ we get
$$
n^{-2}\geq \rho_n^{-1}/n \geq \la\bar z_n, \mathbf{F}( \rho_n \bar z_n+\bar w_n)\ra_{L^2} = \int_{\R^N} \bar z_n(x) f(x, \rho_n  \bar z_n(x)+\bar w_n(x))\, d x.
$$
Assume (i); then $\rho_n^{-1}\bar w_n\to 0$ in $L^2$ since  $M$ is bounded. We may assume without loss of generality that $\rho_n^{-1}\bar w_n(x)\to 0$ for a.a. $x\in\R^N$.  Hence $\bar z_n+\rho_n^{-1}\bar w_n\to \bar z_0$ a.e. This implies that $\rho_n\bar z_n+\bar w_n\to\pm\infty$ for a.a. $x\in A_\pm$.
Using \eqref{17022016-1449}  we are in a position to use the Fatou lemma to get
\begin{align*} 0\geq \liminf_{n\to \infty} \int_{\R^N} \bar z_n f(x, \rho_n \bar z_n+\bar w_n)\,dx&\geq
\int_{\R^N}\liminf_{n\to\infty}\bar z_n f(x,\rho_n\bar z_n+\bar w_n)\,dx\geq\\&\geq \int_{A_+} \check{f}_+\bar z_0\, d x+ \int_{A_-} \hat{f}_-\bar z_0\, d x >0,
\end{align*}
in view of Remark \ref{LL}; this is a contradiction.\\
\indent Assume (ii). Since now $M$ is $L^2$-precompact, we may assume that $\bar w_n\to \bar w_0\in L^2(\R^N)$, $\bar w_n(x)\to \bar w_0(x)$ for a.e. $x\in\R^N$ and there is $\gamma\in L^2(\R^N)$ such that $|\bar w_n|\leq \gamma$  a.e. on $\R^N$ for all $n\in\N$.\\
\indent Clearly
$\la\bar v_n, \mathbf{F}(\bar v_n+\bar w_n)\ra_{L^2}  =  \la\bar v_n + \bar w_n, \mathbf{F}(\bar v_n+\bar w_n)\ra_{L^2} - \la\bar w_n, \mathbf{F}(\bar v_n+\bar w_n)\ra_{L^2}$.
In view of $(SR)_+$, $\lim_{s\to\pm\infty}f(x,s)=0$ for a.a. $x\in\R^N$. Hence, again by \eqref{17022016-1449} and the Lebesgue dominated convergence theorem we have
$$
\la\bar w_n, \mathbf{F}(\bar v_n+\bar w_n)\ra_{L^2}=\int_{\R^N} \bar w_n(x) f(x, \rho_n \bar z_n(x)+\bar w_n(x))\, d x \to 0, \mbox{ as } n\to +\infty,
$$
and, in view of  \eqref{31102017-1613}, arguing as before
\begin{gather*}
0\geq \liminf_{n\to \infty}\la\bar v_n + \bar w_n, \mathbf{F}(\bar v_n+\bar w_n)\ra_{L^2}
=\liminf_{n\to \infty}\int_{\R^N} (\rho_n \bar z_n(x)+\bar w_n(x)) f(x, \rho_n z_n(x)+\bar w_n(x))\,dx>0  \\
\geq \int_{A_+} \check{k}_+ (x)d x + \int_{A_-} \check{k}_-  (x)\,d x>0,
\end{gather*}
we reach a contradiction.
\end{proof}

The set of stationary points of the semiflow $\Phi^\lambda$ related to \eqref{w1}, where $|\lambda-\lambda_0|\leq\delta$ and $\delta$ is given by \eqref{spectra 2} will be denoted  by ${\mathcal E}_\lma$ and let
$${\mathcal E}:= \bigcup_{\lma\in [\lma_0-\delta,\lma_0+\delta]} {\mathcal E}_\lma.$$
\begin{lemma}\label{18022016-0914}
Suppose that  there is $r>0$ such that ${\mathcal E}\subset B_{H^1} (0,r)$ {\em (\footnote{$B_X(x,r)$ (resp. $D_X(x,r)$) stand for the open (resp. closed) ball at $x$ of radius $r>$ in the Banach space $X$.})}. Then there exists $R_\infty=R_\infty(r) > 0$  such that, for any bounded solution $u:\R \to H^1$ of $\Phi^\lambda$ with $\lambda\in [\lma_0-\delta,\lma_0+\delta]$, one has
$$
\sup_{t\in\R} \|\mathbf{Q}\, u(t)\|_{H^1} < R_\infty.
$$
\end{lemma}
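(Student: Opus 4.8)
The plan is to estimate the hyperbolic component $\Q u$ directly, exploiting that the forcing $\Q\F(u)$ is uniformly bounded in $L^2$ while the linear part is hyperbolic on $X=X_-\oplus X_+$. First I would record the equation for $q:=\Q u$. Since $X_\pm$ are $\A$-invariant, $\Q$ commutes with $\A$ and with the semigroup generated by $-\A$; applying $\Q$ to \eqref{w1} gives, for a full bounded solution $u\colon\R\to H^1$ (so that $u(t)\in H^2$ for every $t$ by smoothing),
\[
\dot q(t)=-\A q(t)+\lambda q(t)+\Q\F(u(t)),\qquad t\in\R .
\]
By \eqref{w2} the forcing is controlled, $\|\Q\F(u(t))\|_{L^2}\le\|\F(u(t))\|_{L^2}\le\|m\|_{L^2}$, uniformly in $t$, $u$ and $\lambda$. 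Moreover, writing $d_+:=\dist(\lambda_0,\sigma(\A)\cap(\lambda_0,+\infty))$ and $d_-:=\dist(\lambda_0,\sigma(\A)\cap(-\infty,\lambda_0))$, the choice \eqref{spectra 2} gives $d_\pm>\delta$, and self-adjointness of $\A$ yields $\langle\A w,w\rangle_{L^2}\ge(\lambda_0+d_+)\|w\|_{L^2}^2$ on $X_+$ and $\langle\A w,w\rangle_{L^2}\le(\lambda_0-d_-)\|w\|_{L^2}^2$ on $X_-$. Hence $(-\A+\lambda)\le-\beta_+\I$ on $X_+$ and $(-\A+\lambda)\ge\beta_-\I$ on $X_-$, with $\beta_\pm:=d_\pm-\delta>0$ uniform for $\lambda\in[\lambda_0-\delta,\lambda_0+\delta]$, while $\dim X_-<\infty$.

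Next I would obtain a uniform $L^2$ bound on $q$ by an energy argument applied separately to $q_\pm:=\Q_\pm u$. Testing the $q_+$-equation with $q_+$ gives
\[
\tfrac12\tfrac{d}{dt}\|q_+\|_{L^2}^2\le-\beta_+\|q_+\|_{L^2}^2+\|m\|_{L^2}\|q_+\|_{L^2},
\]
so $\zeta_+:=\|q_+\|_{L^2}$ satisfies $\dot\zeta_+\le-\beta_+\zeta_++\|m\|_{L^2}$. If $\zeta_+(t_0)>\|m\|_{L^2}/\beta_+$ for some $t_0$, then $(\zeta_+-\|m\|_{L^2}/\beta_+)e^{\beta_+t}$ is nonincreasing and positive at $t_0$, forcing $\zeta_+(t)\to+\infty$ as $t\to-\infty$ and contradicting the boundedness of $u$; hence $\|q_+(t)\|_{L^2}\le\|m\|_{L^2}/\beta_+$ for all $t$. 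Symmetrically, testing the $q_-$-equation gives $\dot\zeta_-\ge\beta_-\zeta_--\|m\|_{L^2}$, and a forward blow-up argument yields $\|q_-(t)\|_{L^2}\le\|m\|_{L^2}/\beta_-$ for all $t$. Consequently $\sup_{t\in\R}\|\Q u(t)\|_{L^2}\le C_0$, with $C_0$ depending only on $\|m\|_{L^2}$ and $\beta_\pm$.

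Finally I would upgrade this to the desired $H^1$ bound via the smoothing of the analytic semigroup. Since $\A$ is sectorial (Remark~\ref{spectra}(3)) and, after shifting $\A$ to be positive, $D(\A^{1/2})=H^1$ with equivalent norms, the restriction of $e^{(-\A+\lambda)\tau}$ to $X$ satisfies $\|e^{(-\A+\lambda)\tau}|_X\|_{\mathcal L(L^2,H^1)}\le C\tau^{-1/2}$ for $\tau\in(0,1]$, uniformly in $\lambda\in[\lambda_0-\delta,\lambda_0+\delta]$: the infinite-dimensional stable block $X_+$ contributes the $\tau^{-1/2}$ smoothing, while the finite-dimensional unstable block $X_-$ contributes only a bounded factor (all norms being equivalent there). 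The variation-of-constants formula on $[t-1,t]$,
\[
q(t)=e^{(-\A+\lambda)}q(t-1)+\int_{t-1}^{t}e^{(-\A+\lambda)(t-s)}\Q\F(u(s))\,ds ,
\]
together with the uniform $L^2$ bounds then yields
\[
\|q(t)\|_{H^1}\le C\,\|q(t-1)\|_{L^2}+C\|m\|_{L^2}\int_{0}^{1}\sigma^{-1/2}\,d\sigma\le C\,C_0+2C\|m\|_{L^2}=:R_\infty ,
\]
which is the assertion.

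The main technical obstacle is the smoothing estimate: establishing it uniformly in $\lambda$ while combining the finite-dimensional unstable and the infinite-dimensional stable spectral blocks, and justifying $D(\A^{1/2})=H^1$ for the perturbed operator $\A$. The second delicate point is the comparison (blow-up) step in the $L^2$ estimate, which crucially uses that $u$ is a \emph{full} bounded solution on all of $\R$. I note that the argument above in fact produces $R_\infty$ \emph{independent} of $r$; the hypothesis $\mathcal E\subset B_{H^1}(0,r)$ would enter only if one preferred a compactness/contradiction formulation, or for the companion control of the center component $\P u$ in the subsequent Conley-index construction.
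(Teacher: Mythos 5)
Your proof is correct, but it takes a genuinely different route from the paper's. The paper uses the hypothesis $\mathcal E\subset B_{H^1}(0,r)$ through the gradient structure of the flow (Remark \ref{after-admiss}(2)): since $\alpha(u)$ and $\omega(u)$ consist of equilibria, the solution lies in $B_{H^1}(0,2r)$ for all $t\leq t_u$ and all $t\geq s_u$, and these times anchor the Duhamel formula --- run forward over the whole interval $[t_u,t]$ for $\Q_+u$ using the kernel bound $K\tau^{-1/2}e^{-c\tau}$, and backward from $s_u$ for $\Q_-u$ using the group lower bound $\|e^{-\tau\B_-}v\|_{L^2}\geq (K')^{-1}e^{c\tau}\|v\|_{L^2}$ on the finite-dimensional unstable block. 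You instead get the uniform $L^2$ bound on $\Q_\pm u$ by a pure comparison (backward/forward blow-up) argument that uses only the hyperbolicity of $(\A-\lambda\I)|_X$ and the boundedness of the full solution, and then upgrade to $H^1$ by unit-time parabolic smoothing. Both arguments rest on the same two ingredients --- the spectral gap uniform in $\lambda\in[\lma_0-\delta,\lma_0+\delta]$ and the identification $D(\B_+^{1/2})=H^1\cap X_+$ --- but yours dispenses with the Lyapunov function and with the hypothesis on $\mathcal E$, and yields the sharper conclusion that $R_\infty$ depends only on $\|m\|_{L^2}$ and the gap, not on $r$ (this is consistent with, indeed stronger than, the statement; the bound on $\mathcal E$ is still needed later, e.g.\ to see that $-\infty<t_-\leq t_+<+\infty$ in the verification of \eqref{212311042018}). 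The only points worth spelling out are the differential inequality for $\|\Q_\pm u(t)\|_{L^2}$ at instants where this norm vanishes (harmless, since the comparison is only run on the region where it exceeds $\|m\|_{L^2}/\beta_\pm$) and the uniformity in $\lambda$ of the smoothing constant, which is immediate from $e^{(-\A+\lambda\I)\tau}=e^{\lambda\tau}e^{-\A\tau}$ for $\tau\leq 1$.
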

\begin{proof} Since $\delta<\dist(\lambda_0,\sigma({\A})\setminus\{\lambda_0\})$, there is $c>0$ such that $\sigma (({\mathbf A}-\lma {\I})_{|X_-}) \subset (-\infty, -c)$ and $\sigma ({\mathbf{A}}-\lma {\I})_{|X_+}) \subset (c,+\infty)$ whenever $|\lambda-\lambda_0|\leq\delta$. \\
\indent Fix $\lambda\in [\lambda_0-\delta,\lambda_0+\delta]$ and let ${\B}_\pm:=({\A}-\lambda {\I})_{|X_\pm}$. Clearly ${\B}_+$ is sectorial and positive. By \cite{Dlotko} (see Corollary 1.3.5 and comp. Corollary 1.3.4)  the domain $D({\B}_+^{1/2})=D({\A}_0^{1/2})=H^1\cap X_+$; thus, in view of \cite[Proposition 1.3.6]{Dlotko}, there is $K>0$ independent of $\lma\in [\lma_0-\delta, \lma_0+\delta]$ such that for all $\tau>0$
\be\label{I-sze}\forall\,v\in X_+\;\;\;\;\ \|e^{-\tau {\B_+}}v\|_{H^1}=\|{\B}_+^{1/2}e^{-\tau{\B_+}}v\|_{L^2}\leq K\frac{e^{-c\tau}}{\tau^{1/2}}\|v\|_{L^2},\;\; \tau>0,\ee
and
\be\label{I-sze'}\forall\,v\in H^1\cap X_+\;\;\;\; \|e^{-\tau {\B_+}}v\|_{H^1}\leq Ke^{-c\tau}\|v\|_{H^1}.\ee
where $\{e^{-\tau {\B}_+}\}_{\tau\geq 0}$ denotes the semigroup generated by $-{\B}_+$.\\
\indent  The semigroup $\{e^{-\tau {\B}_-}\}_{\tau\geq 0}$ generated by ${\B}_-$ is uniformly continuous, i.e. it extends to a strongly continuous group and there is $K'> 0$
 independent of $\lma\in [\lma_0-\delta, \lma_0+\delta]$  such that
\be\label{II-gie}\forall\,v\in X_-\;\; \|e^{-\tau {\B}_-}v\|_{L^2}\geq \frac{1}{K'}e^{c\tau}\|v\|_{L^2},\;\; \tau\geq 0 \ee
since $\sigma({\B}_-)<-c$.\\
\indent Now take a solution $u:\R\to H^1$ of the semiflow $\Phi^\lambda$ corresponding to \eqref{w1}. It is well-known that $u$  is a mild solution (see \cite{Henry}), i.e. the so-called Duhamel formula holds
\begin{equation}\label{27022016-2116}
u(t) = e^{-(t-s)(
{\A}-\lma {\I})} u(s)+\int_{s}^{t} e^{-(t-\tau)({\mathbf{A}}-\lma {\I})} \mathbf{F}(u(\tau))\, d\tau \mbox{ for all } s,t\in\R, \, t>s,
\end{equation}	where $\{e^{-\tau({\A}-\lambda {\I})}\}_{\tau\geq 0}$ denotes the analytic semigroup generated by $-({\A}-\lambda {\I})$.\\
\indent Since, due to Remark \ref{after-admiss} (2), $\alpha(u)\subset {\mathcal E}_{\lma}$, there exists $t_u<0$ such that $\| u(\tau ) \|_{H^1}<2r$ for all  $\tau \leq t_u$. Thus,  by \eqref{I-sze'}, \eqref{szac q plus} and \eqref{I-sze}, for $t\geq t_u$
\begin{gather*}
\| {\Q}_+u(t)\|_{H^1} \leq \|e^{-(t-t_u) {\B}_+} {\Q}_+ u(t_u) \|_{H^1} + \int_{t_u}^{t} \| e^{-(t-\tau){\B}_+} {\Q}_+{\F}(u(\tau))\|_{H^1}\, d\tau\leq\\
K\left(\|{\Q}_+\|_{{\mathcal L}(H^1,H^1)}e^{-c (t-t_u)}2r + \int_{t_u}^{t} (t-\tau)^{-1/2} e^{-c (t-\tau)}\|{\Q}_+{\F}(u(\tau))\|_{L^2}\,d \tau\right).\end{gather*}
In view of \eqref{17022016-1449}
\be\label{est 1}\|{\Q}_+{\F}(u(\tau))\|_{L^2}\leq \|m\|_{L^2},\;\; t_u\leq\tau\leq t;\ee
thus
$$\| {\Q}_+u(t)\|_{H^1}\leq K\left(\|{\Q}_+\|_{{\mathcal L}(H^1,H^1)} 2r + \|m\|_{L^2} \int_{0}^{+\infty} s^{-1/2} e^{-c s}\,ds\right) =: R'_{1,\infty}.$$
This means that $\|{\Q}_+ u(t)\|_{H^1} \leq R_{1,\infty}=\max\{2r,R'_{1,\infty}\}$ for all $t\in\R$.\\
\indent Since,  due to Remark \ref{after-admiss} (2), $\omega(u)\subset \mathcal{E}_\lma$ we can take $s_u\in\R$ such that $\| u(\tau)\|_{H^1} \leq 2r$, for all  $\tau \geq s_u$, and observe that, in view of \eqref{27022016-2116}, we have for each $t< s_u$
$$
{\Q}_- u(s_u) = e^{-(s_u-t){\B}_-}{\Q}_- u(t) + \int_{t}^{s_u} e^{-(s_u-\tau){\B}_-} {\Q}_- {\F}(u(\tau))\, d \tau.
$$
Hence, using \eqref{II-gie}, we get
$$
\| {\Q}_- u(t)\|_{L^2}\leq K'\left(e^{c (t-s_u)} \|{\Q}_- u(s_u)\|_{L^2}+\int_{t}^{s_u}e^{c(t-\tau)} \|{\Q}_- {\F} (u(\tau))\|_{L^2}\,d \tau\right).$$
Again in view of  \eqref{17022016-1449}
\be\label{est 2}\|{\Q}_-{\F}(u(\tau))\|_{L^2}\leq \|m\|_{L^2},\;\; t\leq\tau\leq s_u.\ee
Therefore
$$\| {\Q}_- u(t)\|_{L^2}\leq K'(2r +\|m\|_{L^2} c^{-1})=:R'_{2,\infty}$$
and thus $\|{\Q}_- u(t)\|_{L^2} \leq \tilde R_{2,\infty}:=\max\{2r,R'_{2,\infty}\}$ for all $t\in\R$. Since $X_-$ is finite dimensional, there is a constant $R_{2,\infty}>0$ such $\|{\Q}_- u(t)\|_{H^1}\leq R_{2,\infty}$ for all $t\in\R$.\\
\end{proof}

\begin{lemma}\label{25092017-2310}
If $u:[t_0,t_1]\to H^{1}(\R^N)$ is a solution of $\Phi^\lambda$ for some $\lma\in\R$, then
$$
\frac{1}{2}\frac{d}{d t} \| \mathbf{P}u(t)\|_{L^2}^{2} = (\lma-\lma_0) \|\mathbf{P}u(t)\|_{L^2}^{2} + \la\mathbf{P}u(t), \mathbf{F}(u(t)\ra_{L^2}, t\in (t_0,t_1).
$$
when $u$ solves \eqref {w1}.
\end{lemma}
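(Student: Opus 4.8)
The plan is to apply the projection $\P$ to the evolution equation \eqref{w1} and exploit that $\P$ commutes with $\A$. Writing $w(t):=\P u(t)$, I would first note that, since $u$ solves \eqref{w1}, one has $u(t)\in D(\A)=H^2$ and $\dot u(t)=-\A u(t)+\lma u(t)+\F(u(t))$ in $L^2$ for $t\in(t_0,t_1)$ (see Remark \ref{spectra} (6)). Because $\P$ is a bounded operator on $L^2$, it passes through the time derivative, so that
$$\dot w(t)=\P\dot u(t)=-\P\A u(t)+\lma\, w(t)+\P\F(u(t)),\quad t\in(t_0,t_1).$$

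The key algebraic step is the identity $\P\A u=\A\P u=\lma_0\P u$ for $u\in H^2$. First I would establish this commutation: since $\A$ is self-adjoint and $X_0=\Ker(\A-\lma_0\I)$ is its eigenspace, for every eigenfunction $\vp\in X_0$ and every $u\in D(\A)$ one computes
$$\la\P\A u,\vp\ra_{L^2}=\la\A u,\P\vp\ra_{L^2}=\la\A u,\vp\ra_{L^2}=\la u,\A\vp\ra_{L^2}=\lma_0\la u,\vp\ra_{L^2}=\la\lma_0\P u,\vp\ra_{L^2},$$
using that $\P$ is self-adjoint with $\P\vp=\vp$. As both $\P\A u$ and $\lma_0\P u$ lie in $X_0$ and agree when tested against every $\vp\in X_0$, they coincide; hence $\dot w=(\lma-\lma_0)w+\P\F(u)$ on $(t_0,t_1)$.

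It then remains to differentiate the squared norm. Since $w\in C^1((t_0,t_1),L^2)$,
$$\frac12\frac{d}{dt}\|w(t)\|_{L^2}^2=\la w(t),\dot w(t)\ra_{L^2}=(\lma-\lma_0)\|w(t)\|_{L^2}^2+\la w(t),\P\F(u(t))\ra_{L^2}.$$
Finally, using once more that $\P$ is self-adjoint and idempotent with $\P w=w$, I rewrite $\la w,\P\F(u)\ra_{L^2}=\la\P w,\F(u)\ra_{L^2}=\la w,\F(u)\ra_{L^2}=\la\P u,\F(u)\ra_{L^2}$, which yields exactly the asserted formula.

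There is essentially no analytical obstacle here: the only points requiring care are the commutation $\P\A=\A\P$ on $D(\A)$ — which follows cleanly from self-adjointness and the spectral nature of $X_0$ as above — and the justification of termwise time differentiation, guaranteed by the regularity $u\in C^1((t_0,t_1),L^2)\cap C((t_0,t_1),H^2)$ recalled in Remark \ref{spectra} (6). Everything else is a direct computation.
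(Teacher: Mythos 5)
Your proof is correct and uses essentially the same idea as the paper: the paper kills the $(\A-\lma_0\I)u$ term by noting that $X_0$ is orthogonal to the range of $\A-\lma_0\I$ (by symmetry of $\A$), which is exactly the content of your commutation identity $\P\A u=\lma_0\P u$, and the rest is the same direct computation of $\tfrac{1}{2}\tfrac{d}{dt}\|\P u(t)\|_{L^2}^2=\la\P u(t),\dot u(t)\ra_{L^2}$.
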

\begin{proof} The symmetry of ${\mathbf{A}}$ implies that $X_0$ is orthogonal to to the range $R({\mathbf{A}}-\lma_0 {\I})$ in $L^2$. Hence
\begin{align*}
\frac{1}{2}\frac{d}{d t}\| \mathbf{P}u(t)\|_{L^2}^{2} &= \la\mathbf{P}u(t), \dot u(t)\ra_{L^2}
= \la\mathbf{P}u(t), -({\mathbf{A}}-\lma_0 {\I}) u(t) +(\lma-\lma_0) u(t)+\mathbf{F}(u(t))\ra_{L^2}\\
&=(\lma-\lma_0) \|\mathbf{P}u(t)\|_{L^2}^{2} + \la\mathbf{P}u(t), \mathbf{F}(u(t)\ra_{L^2}
\end{align*}
for all $t\in (t_0,t_1)$.
\end{proof}

\begin{proof}[Proof of  Theorem \ref{24062015-1056} (ii)] Assume \eqref{alfa-infty} and suppose to the contrary that $\lma_0$ is not a point of bifurcation from infinity. Thus there are $r>0$ and $\delta>0$ satisfying condition \eqref{spectra 2} such that
\begin{equation}\label{08072016-0939}
{\mathcal E} \subset B_{H^1} (0,r).
\end{equation}
By Proposition \ref{13122016-1923}, there is $R>0$ such that
the  $K(\Phi^{\lma_0\pm \delta})\subset B_{H^1}(0,R)$.
By Lemma  \ref{18022016-0914} one has $R_\infty=R_\infty(r) \geq R$ such that, for any bounded solution of $u:\R\to H^1$ of $\Phi^\lambda$, $|\lambda-\lambda_0|\leq\delta$, one has
\begin{equation}\label{02112017-2001}
\sup_{t\in \R} \|\mathbf{Q} u (t)\|_{H^1} < R_\infty.
\end{equation}
\indent Let $M_{R_\infty}$ be the set of all $\bar u\in H^1$
such that there exists a solution $u:(-\infty, 0]\to H^1$ of $\Phi^\lma$, $|\lambda-\lambda_0|\leq \delta$, with $u(0)=\bar u$ and $\|\mathbf{Q} u(t)\|_{H^1} \leq R_\infty$ for all $t\leq 0$. In view of Proposition \ref{26092017-1533}, the set $M:=\mathbf{Q}M_{R_\infty}\subset X$ is relatively  compact in $L^2$.
By Lemma \ref{25092017-1854} there are  $R_0 \geq R_\infty$ and $\alpha>0$ such that
for all $\bar v\in X_0\setminus B_{L^2}(0,R_0)$ and $\bar w\in M$
\begin{equation}\label{27092017-2022}
\la \bar v, {\F}(\bar v+\bar w)\ra_{L^2} >\alpha
\end{equation}
if $(LL)_+$ or $(SR)_+$ is satisfied, or
\begin{equation}\label{27092017-20220}
\la \bar v, {\F}(\bar v+\bar w)\ra_{L^2}<-\alpha
\end{equation}
if $(LL)_-$ or $(SR)_-$ is satisfied.\\
\indent Put
$$B:=\{\bar u\in H^1 \mid \|\mathbf{P}\bar u\|_{L^2} \leq R_0,\;\; \|\mathbf{Q}\bar u\|_{H^1}\leq R_\infty \}.
$$
Taking $\delta$ smaller if necessary we may assume that
\begin{equation}\label{25092017-2315}
\delta R_{0}^{2}<\alpha.\end{equation}
Then, for any $\lma\in [\lma_0-\delta,\lma_0+\delta]$, $B$ is an isolating neighborhood for the semiflow $\Phi^\lambda$. To see this, suppose to the contrary that there is
$\bar u\in \mathrm{Inv}_{\Phi^\lambda}(B)\cap \partial B$. Hence there is a solution $u:\R\to B$ of $\Phi^\lambda$ through $\bar u$, i.e. $\bar u=u(0)$. Since $u$ is bounded, we have $\|{\Q}\bar u\|_{H^1}<R_\infty$ in view of \eqref{02112017-2001}. Therefore $\|\mathbf{P}\bar u\|_{L^2}=R_0$. Let $\bar u=\bar v+\bar w$, where
$\bar v:={\P}\bar u$ and $\bar w:={\Q}\bar u$.  Then $\bar v\in X_0\setminus B_{L^2}(0,R_0)$ and $\bar w\in N$. By Lemma \ref{25092017-2310},
$$
\frac{1}{2}\left.\frac{d}{d t} \|\mathbf{P}u(t)\|_{L^2}^{2} \right|_{t=0}=
(\lma-\lma_0)\|{\P}u(0)\|_{L^2}^2 + \la\mathbf{P}u(0), \mathbf{F}(u(0))\ra_{L^2}=(\lambda-\lambda_0)R_0^2+\la \bar v,{\F}(\bar v+\bar w)\ra_{L^2}.
$$
Due to \eqref{25092017-2315} and \eqref{27092017-2022} (or \eqref{27092017-20220})
$$\frac{1}{2}\left.\frac{d}{dt}\|\mathbf{P}u(t)\|_{L^2}^2 \right|_{t=0} >0\quad
\left(\text{or}\;\; \frac{1}{2}\left. \frac{d}{d t}\|\mathbf{P}u(t)\|_{L^2}^2\right|_{t=0} <0\right).
$$
This contradicts the assumption  $u(\R)\subset B$ and proves that $B$ is an isolating neighborhood for the semiflows $\Phi^\lambda$, $\lambda\in [\lma_0-\delta,\lma_0+\delta]$. Using the continuation property (H4) of the homotopy index, we obtain
\begin{equation}\label{27092017-1210}
h(\Phi^{\lma_0-\delta}, K_{\lma_0-\delta}) = h (\Phi^{\lma_0+\delta},K_{\lma_0+\delta})
\end{equation}
where $K_\lma:= \mathrm{Inv}_{\Phi^{\lma}} (B)$ for $\lma\in [\lma_0-\delta,\lma_0+\delta]$.\\
We also claim that, for $\lma\in[\lma_0-\delta, \lma_0+\delta]$, one has
\be\label{212311042018}
K_\lma = K(\Phi^\lma).
\ee
Indeed, the inclusion $K_\lma\subset K(\Phi^\lma)$ is self-evident. Conversely, any bounded full solution  $u:\R\to H^1(\R^N)$ of $\Phi^\lma$ satisfies \eqref{02112017-2001}. Therefore if $u$ leaves $B$, then for some $t\in\R$ we have
$\|\mathbf{P}u(t)\|_{L^2}>R_0$. Put
$t_-:=\inf \{t\in\R \mid \| \mathbf{P} u(t) \|_{L^2}>R_0\}$ and
$t_+:= \sup \{t\in\R \mid \| \mathbf{P} u(t) \|_{L^2}>R_0 \}$. In view of \eqref{08072016-0939} and the fact that $R_0\geq R_\infty>r$ we see that $-\infty<t_- <t_+<+\infty$. It is clear that $\|\mathbf{P}u(t_\pm)\|_{L^2} = R_0$ and
$$
\|\mathbf{P} u(t) \|_{L^2} <R_0 \mbox{ for all } t\in (-\infty,t_-)\cup (t_+,+\infty),
$$
which means that
\be\label{212111042018}
\left.\frac{d}{d t} \|\mathbf{P}u(t)\|_{L^2}^{2} \right|_{t=t_-}\!\!\!\! \geq 0
\ \mbox{ and }\  \left.\frac{d}{d t} \|\mathbf{P}u(t)\|_{L^2}^{2} \right|_{t=t_+} \!\!\!\!\leq 0.
\ee
But on the other hand, as before,
$$
\frac{1}{2}\left.\frac{d}{d t} \|\mathbf{P}u(t)\|_{L^2}^{2} \right|_{t=t_\pm}=
(\lma-\lma_0)\|{\P}u(t_\pm)\|_{L^2}^2 + \la\mathbf{P}u(0), \mathbf{F}(u(0))\ra_{L^2}=(\lambda-\lambda_0)R_0^2+\la \bar v,{\F}(\bar v+\bar w)\ra_{L^2}
$$
which together with \eqref{27092017-2022} (or \eqref{27092017-20220}) yields
$$
\frac{1}{2}\left.\frac{d}{d t} \|\mathbf{P}u(t)\|_{L^2}^{2} \right|_{t=t_\pm}>0 \ \ \  \left(\mbox{or } \frac{1}{2}\left.\frac{d}{d t} \|\mathbf{P}u(t)\|_{L^2}^{2} \right|_{t=t_\pm} <0\right).
$$
This contradicts one of the inequalities in \eqref{212111042018} and shows \eqref{212311042018}. Therefore, by Proposition \ref{13122016-1923}, one has
$$
h(\Phi^{\lambda_0\pm\delta},K_{\lambda_0\pm\delta})=
h(\Phi^{\lambda_0\pm\delta},K(\Phi^{\lambda_0\pm\delta}))=
\Sigma^{k(\lambda_0\pm\delta)}.
$$
and this together with \eqref{27092017-1210} leads to a contradiction, since $k(\lambda_0+\delta)-k(\lambda_0-\delta)=\dim X_0>0$.
\end{proof}


\end{document}